\newcommand{\arxiv}[1]{\href{http://arxiv.org/abs/#1}{\tt
    arXiv:\nolinkurl{#1}}}
\tikzset{
	ch/.style={circle,draw,on chain,inner sep=2pt},
	chj/.style={ch,join},
	every path/.style={shorten >=4pt,shorten <=4pt}
}
\newcommand{\RomanNumeralCaps}[1]
{\MakeUppercase{\romannumeral #1}}
\theoremstyle{plain}
\newtheorem{thm}{Theorem}[section]
\newtheorem{lemma}[thm]{Lemma}
\newtheorem{prop}[thm]{Proposition}
\theoremstyle{definition}
\newtheorem{df}[thm]{Definition}
\numberwithin{equation}{section}
\def\bfU{\mathbf{U}}
\title[Differential operator realization of braid group action]{Differential operator realization of braid group action on $\imath$quantum groups}
\author[Zhaobing Fan]{Zhaobing Fan}
\address{Harbin Engineering University, Harbin, China}
\email{fanzhaobing@hrbeu.edu.cn}
\thanks{}
\author[Jicheng Geng]{Jicheng Geng}
\address{Harbin Engineering University, Harbin, China}
\email{jcgeng@hrbeu.edu.cn}
\thanks{}
\author[Shaolong Han]{Shaolong Han}
\address{Harbin Engineering University, Harbin, China}
\email{algebra@hrbeu.edu.cn}
\thanks{}
\keywords{$\imath$quantum group, modified $q$-Weyl algebra, braid group action.}
\begin{document}

\begin{abstract}
We construct a unique braid group action on modified $q$-Weyl algebra $\mathbf A_q(S)$. Under this action, we give a realization of the braid group action on quasi-split $\imath$quantum groups $^{\imath}\bfU(S)$ of type $\mathrm{AIII}$. Furthermore, we  directly construct a unique braid group action on polynomial ring $\mathbb P$ which is compatible with the braid group action on $\mathbf A_q(S)$ and $^{\imath}\bfU(S)$.
\end{abstract}

\maketitle

\setcounter{tocdepth}{2}

\section{Introduction}
The quantum groups, introduced independently by Drinfeld and Jimbo (cf. \cite{Dr86,J86}), are certain families of  Hopf algebras which are deformations of universal enveloping algebras of Kac-Moody algebras. 
In the theory of quantum groups, braid group action is an important research topic. These actions lead to the construction of PBW bases and canonical bases of quantum groups (cf. \cite{Lus90a, Lus90b, Lus94}). 
When we focus on affine quantum groups, besides their Serre presentation, the Drinfeld presentation is another important presentation (cf. \cite{Dr88}), which plays a fundamental role in the representation theory of affine quantum groups.
The braid group action of affine quantum groups is an essential tool to study Drinfeld’s new realization (cf. \cite{Da93,Be94}).
Moreover, the braid group action can be applied widely to the field of  geometric representation theory, knot theory and  categorification.
\vskip 2mm
The $\imath$quantum groups $\bf U^{\imath}$ arising from quantum symmetric pairs $(\bf U,\bf U^{\imath})$ associated to Satake diagrams (cf. \cite{Let99,Kolb14}) can be seen as non-trivial generalization of quantum groups. 
Therefore, it is natural to ask whether there exists braid group action on $\imath$quantum groups. In \cite{KP11}, Kolb and Pellegrini used software to construct the braid group action on a class of $\imath$quantum groups of finite types (including all quasi-split types and type AII). 
The braid group of type AIII/AIV was constructed by Dobson in \cite{Dob20}. The Hall algebraic approach to braid group action of quasi-split $\imath$quantum groups was studied in \cite{LW21}. For general case, Wang and Zhang gave a conceptual construction of relative braid group action on $\imath$quantum groups of arbitrary finite types (cf. \cite{WZ22}). 
Soon after, Zhang generalized these constructions to Kac-Moody type (cf. \cite{Z22}). The Drinfeld type presentation for $\imath$quantum groups can be found in a series of papers \cite{LW21b, LWZ22,Z21}.
\vskip 2mm

The aim of this paper is to give differential operator realization of braid group action on two typical quasi-split $\imath$quantum groups of type AIII, i.e., $\bfU^\jmath$ and $\bfU^\imath$. In \cite{FGH}, the authors introduced the modified $q$-Weyl algebra $\mathbf A_q(\mathcal{S})$ associated with Satake diagram $\mathcal S$ by using modified $q$-differential operators on polynomial ring. They constructed an algebraic homomorphism $\varphi$ from quasi-split $\imath$quantum groups $^\imath\bfU(\mathcal{S})$ to $\mathbf A_q(\mathcal{S})$. Within this framework, we shall construct braid group action of type $B$ on two classes of modified $q$-Weyl  algebras. Subsequently, we show that the braid
group operators on $\mathbf A_q(\mathcal{S})$ are intertwined with Kolb's braid
group operators via the homomorphism $\varphi$. This means that we give a realization of Kolb's braid group action. In this realization, we show that the braid group action of $\mathbf A_q(\mathcal{S})$  is unique. It is worth mentioning that this realization can be seen as the generalization of Floreanini and Vinet's work (cf. \cite{FV91}). They defined the braid group action on $q$-Weyl algebra $W_q(n)$ which is intertwined with the braid group action of quantum groups $U_q(A_{n-1})$ and $U_q(C_n)$ with respect to the oscillator representations.
\vskip 2mm
Since the modified $q$-Weyl algebra $\mathbf A_q(\mathcal{S})$ has a natural action on a polynomial ring $\mathbb P$, the $^\imath\bfU(\mathcal{S})$-module structure on $\mathbb P$ can be obtained naturally by the homomorphism $\varphi$. Using the braid group action on $\mathbf A_q(\mathcal{S})$, we further construct directly braid group action on $\mathbb P$ which is compatible with the braid group action on $^\imath\bfU(\mathcal{S})$. We also show that this compatible braid group action on $\mathbb P$ is unique. This is different from the approach by Wang and Zhang's braid group action on $^\imath\bfU(\mathcal{S})$-modules (cf. \cite[Section 10.2]{WZ22}), because they regarded the $\mathbf U$-module $M$ as a $^\imath\bfU(\mathcal{S})$-module by restriction.
 \vskip 2mm
Motivated by this work, in the forthcoming paper, we shall study the braid group action on the modified $q$-weyl algebra corresponding to affine quasi-split $\imath$quantum groups, and then obtain the braid group action on these affine $\imath$quantum groups.
 \vskip 2mm
The paper is organized as follows. In Section \ref{sec:iquantum}, we recall the definition of $\imath$quantum groups $^{\imath}\bfU(\mathcal{S})$ associated to two Satake diagrams and the braid group action on these $\imath$quantum groups. 
In Section \ref{sec:qWeyl}, we define four variants braid group operators $T_{i,e}'$ and $T_{i,-e}''$ $(e\in\{1,-1\})$ on modified $q$-Weyl algebra $\mathbf A_q(\mathcal{S})$ and show that they satisfy  braid group relations of type $B$. Moreover, we show that braid group action on $\mathbf A_q(\mathcal{S})$ coincides with the action on $^{\imath}\bfU(\mathcal{S})$ by the homomorphism $\varphi:{^{\imath}\bfU(\mathcal{S})}\to \mathbf A_q(\mathcal{S})$. In Section \ref{sec:polynomial},  we define the braid group action on polynomial ring $\mathbb P$ and further prove that the braid group action on $\mathbf A_q(\mathcal{S})$  is compatible with the action on  $\mathbb P$. This compatible action induce compatible braid group action of $^{\imath}\bfU(\mathcal{S})$ on $\mathbb P$.

\vskip 3mm

{\bf Acknowledgements}.
Z. Fan was partially supported by the NSF of China grant 12271120, the NSF of Heilongjiang Province grant JQ2020A001, and the Fundamental Research Funds for the central universities.

\section{Braid group action on $\imath$quantum groups}\label{sec:iquantum}
We consider the Dynkin diagram of type $A_{n-1}$. There are $n$ nodes $\{1,2,\cdots,n\}$ on this Dynkin diagram. We define an involution $\rho_n$ between these nodes by $\rho_n(i)=n+1-i$ $(1\leq i\leq n)$.
Let $n=2r$ or $2r+1$. The  definition of $\rho_n$ can be described as the following Satake diagrams.
\begin{center}	
	\label{figure:j}	
	\begin{tikzpicture}
		\coordinate (A) at (0,1.5);
		\node at (A) {Satake diagram \RomanNumeralCaps{1}
		};
		\matrix [column sep={0.6cm}, row sep={0.5 cm,between origins}, nodes={draw = none,  inner sep = 3pt}]
		{
			\node(U1) [draw, circle, fill=white, scale=0.6, label = 1] {};
			&\node(U2)[draw, circle, fill=white, scale=0.6, label =2] {};
			&\node(U3) {$\cdots$};
			&\node(U4)[draw, circle, fill=white, scale=0.6, label =$r-1$] {};
			&\node(U5)[draw, circle, fill=white, scale=0.6, label =$r$] {};
			\\
			&&&&&
			\\
			\node(L1) [draw, circle, fill=white, scale=0.6, label =below:$2r$] {};
			&\node(L2)[draw, circle, fill=white, scale=0.6, label =below:$2r-1$] {};
			&\node(L3) {$\cdots$};
			&\node(L4)[draw, circle, fill=white, scale=0.6, label =below:$r+2$] {};
			&\node(L5)[draw, circle, fill=white, scale=0.6, label =below:$r+1$] {};
			\\
		};
		\begin{scope}
			
			\draw (U1) -- node  {} (U2);
			\draw (U2) -- node  {} (U3);
			\draw (U3) -- node  {} (U4);
			\draw (U4) -- node  {} (U5);
			\draw (U5) -- node  {} (L5);
			\draw (L1) -- node  {} (L2);
			\draw (L2) -- node  {} (L3);
			\draw (L3) -- node  {} (L4);
			\draw (L4) -- node  {} (L5);
			\draw (L1) edge [color = blue,<->, bend right, shorten >=4pt, shorten <=4pt] node  {} (U1);
			\draw (L2) edge [color = blue,<->, bend right, shorten >=4pt, shorten <=4pt] node  {} (U2);
			\draw (L4) edge [color = blue,<->, bend left, shorten >=4pt, shorten <=4pt] node  {} (U4);
			\draw (L5) edge [color = blue,<->, bend left, shorten >=4pt, shorten <=4pt] node  {} (U5);
		\end{scope}
	\end{tikzpicture}
\end{center}
\vskip 3mm
\begin{center}
	\label{figure:i}
	\begin{tikzpicture}
		\coordinate (A) at (0,1.5);
		\node at (A) {Satake diagram \RomanNumeralCaps{2}};		
		\matrix [column sep={0.6cm}, row sep={0.5 cm,between origins}, nodes={draw = none,  inner sep = 3pt}]
		{
			\node(U1) [draw, circle, fill=white, scale=0.6, label = 1] {};
			&\node(U2)[draw, circle, fill=white, scale=0.6, label =2] {};
			&\node(U3) {$\cdots$};
			&\node(U5)[draw, circle, fill=white, scale=0.6, label =$r$] {};
			\\
			&&&&
			\node(R)[draw, circle, fill=white, scale=0.6, label =$r+1$] {};
			\\
			\node(L1) [draw, circle, fill=white, scale=0.6, label =below:$2r+1$] {};
			&\node(L2)[draw, circle, fill=white, scale=0.6, label =below:$2r$] {};
			&\node(L3) {$\cdots$};
			&\node(L5)[draw, circle, fill=white, scale=0.6, label =below:$r+2$] {};
			\\
		};
		\begin{scope}
			\draw (U1) -- node  {} (U2);
			\draw (U2) -- node  {} (U3);
			\draw (U3) -- node  {} (U5);
			\draw (U5) -- node  {} (R);
			
			\draw (L1) -- node  {} (L2);
			\draw (L2) -- node  {} (L3);
			\draw (L3) -- node  {} (L5);
			\draw (L5) -- node  {} (R);
			\draw (R) edge [color = blue,loop right, looseness=40, <->, shorten >=4pt, shorten <=4pt] node {} (R);
			\draw (L1) edge [color = blue,<->, bend right, shorten >=4pt, shorten <=4pt] node  {} (U1);
			\draw (L2) edge [color = blue,<->, bend right, shorten >=4pt, shorten <=4pt] node  {} (U2);
			\draw (L5) edge [color = blue,<->, bend left, shorten >=4pt, shorten <=4pt] node  {} (U5);
		\end{scope}
	\end{tikzpicture}
\end{center}

Let $I=\{1,2,\cdots,n\}$. We denote by $\mathcal S$ the above Satake diagrams uniformly.
\begin{df}\cite[Proposition 4.1]{KP11}
 The $\imath$quantum group $^{\imath}\bfU(\mathcal{S})$ associated with above Satake diagrams is generated by elements $\{B_i\,|\,i\in I\}$ and $\{K_i\,|\,i\in I\setminus \{j\}, \rho_n(j)=j\}$ subject to the following relations 
\begin{align*}
 &K_iK_{\rho_n(i)}=1,\quad K_i B_j= q^{j\cdot\rho_n(i)-j\cdot i}B_j K_i,\quad \mbox{for all $i,j\in I$},\\
	&B_j B_i - B_i B_j = \delta_{\rho_n(i), j} \frac{K_i -K_{\rho_n(i)}}{q-q^{-1}},\quad \mbox{if $i\cdot j=0$,}  \\
	&B_i^2 B_j -(q+q^{-1}) B_i B_j B_i + B_j B_i^2 = \delta_{i,\rho_n(i)}  B_j \\
	&\qquad\ - \delta_{j,\rho_n(i)}(q+q^{-1})B_i (q^{-1}\varsigma_iK_i+ q^2\varsigma_{\rho_n(i)} K_{\rho_n(i)}), \quad  \mbox{if $i\cdot j=-1$,}
\end{align*}
where $\varsigma_i=\delta_{i,r}+q^{-1}\delta_{i,r+1}$ for $i\in I$.
\end{df}

\begin{prop}\label{prop:autoU}
	There exist two algebra anti-automorphisms $\Omega$ and $\Psi$ on $^{\imath}\bfU(\mathcal{S})$ defined by
	\begin{align*}
		&\Omega(B_i)=B_{\rho_n(i)},\quad \Omega(K_i)=K_{\rho_n(i)},\quad \Omega(q)=q^{-1},\\
		&\Psi(B_i)=B_i,\quad \Psi(K_i)=q^{-\delta_{i,r}\delta_{r\cdot\rho_n(r),-1}}K_{\rho_n(i)},\quad \Psi(q)=q.
	\end{align*}
\end{prop}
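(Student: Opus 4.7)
The plan is to invoke the universal property of $^{\imath}\bfU(\mathcal{S})$ as a presented algebra: I would define each map on the free algebra on the symbols $B_i$, $K_i^{\pm 1}$ with the prescribed action on the scalar $q$, and then verify that every defining relation, read in reverse multiplicative order, still holds after the generators are replaced by their images. Since an anti-homomorphism $f$ satisfies $f(xy)=f(y)f(x)$, each check reduces to evaluating the ``opposite'' polynomial of a relation on the images and showing it vanishes in $^{\imath}\bfU(\mathcal{S})$.

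For $\Omega$, the crucial Dynkin input is that $\rho_n$ is an involutive diagram automorphism, so $\rho_n(i)\cdot\rho_n(j)=i\cdot j$. The identity $K_iK_{\rho_n(i)}=1$ is immediate. For the $q$-commutation $K_iB_j=q^{j\cdot\rho_n(i)-j\cdot i}B_jK_i$, applying $\Omega$ swaps the two factors on the left and inverts $q$, and this sign flip is exactly compensated by substituting $(\rho_n(i),\rho_n(j))$ for $(i,j)$ in the original relation. The $i\cdot j=0$ bracket relation is palindromic in structure and transforms cleanly. The Serre relation at $i\cdot j=-1$ is the main computation: the cubic word on the left-hand side is symmetric enough that its $\Omega$-image has the same $(q+q^{-1})$ coefficient, while on the right-hand side the two summands $q^{-1}\varsigma_iK_i$ and $q^2\varsigma_{\rho_n(i)}K_{\rho_n(i)}$ are interchanged by the swap $i\leftrightarrow\rho_n(i)$ and simultaneously have their $q$-exponents inverted by $\Omega$; comparing with the Serre relation for the swapped pair $(\rho_n(i),\rho_n(j))$ then closes the check.

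For $\Psi$, the only new feature is the scalar twist on $K_r$ (present only in type I, where $r\cdot\rho_n(r)=-1$), encoded by the exponent $-\delta_{i,r}\delta_{r\cdot\rho_n(r),-1}$. The $KB$-commutation is unaffected because $\Psi$ fixes $q$ and the scalar twist can be moved past $B_j$. The delicate verifications are (i) that the product $\Psi(K_i)\Psi(K_{\rho_n(i)})$ is compatible with $K_iK_{\rho_n(i)}=1$, (ii) that the bracket relations involving $B_r$ remain valid, and (iii) that when $i=r$ and $j=r+1$ the combination $q^{-1}\varsigma_iK_i+q^2\varsigma_{\rho_n(i)}K_{\rho_n(i)}$ transforms so as to absorb the scalar twist. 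The exponent in the definition of $\Psi(K_i)$ is chosen precisely so that these scalars balance.

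The main obstacle I anticipate is the Serre relation for $\Psi$ at the boundary indices $\{r,r+1\}$ in type I, since this is the unique place where the $K_r$-twist, the explicit values $\varsigma_r=1$ and $\varsigma_{r+1}=q^{-1}$, and the reversal of the cubic monomials all interact. Once the relations are checked, each map is a well-defined anti-endomorphism, and bijectivity is automatic: $\Omega^2$ acts trivially on generators and on $q$, so $\Omega$ is an involutive anti-automorphism; for $\Psi$, one writes down the inverse directly by inverting the scalar twist on $K_{\rho_n(i)}$, so $\Psi$ is also an anti-automorphism.
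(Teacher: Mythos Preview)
Your proposal is correct and matches the paper's approach: the paper's proof consists of the single sentence ``It is straightforward to verify by direct computation,'' and what you have written is precisely an outline of that direct computation, identifying the relevant relations and the boundary cases where the scalar twist on $K_r$ must be checked carefully.
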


\begin{proof}
It is straightforward to verify by direct computation.
\end{proof}

We denote by $\mathbf U^\jmath$ (resp. $\mathbf U^\imath$) the $\imath$quantum group corresponding to Satake diagram \RomanNumeralCaps{1} (resp. \RomanNumeralCaps{2}). Let $e\in\{1,-1\}$. We set $[x, y]_{e}=xy-q^{e}yx$. The following Proposition is slight modification of coefficients in \cite[Theorem 4.3, Theorem 4.4, Theorem 4.6, Theorem 4.7]{KP11}. The proof is exactly the same as that in \cite[Section 4.5]{KP11}.

\begin{prop}[{\cite[Theorem 4.3--4.4, Theorem 4.6--4.7]{KP11}}]\label{prop:braidU}
There exist unique automorphisms $\tau'_{i,e}$ and $\tau''_{i,-e}$ $(i \in\{1,2,\cdots,r\})$ on $\bfU^\jmath$ satisfying the following:
\begin{align}
	\tau'_{i,e}(K_j) &= \tau''_{i,-e}(K_j) = \begin{cases}
		K_{\rho_n(i)}, \quad & \text{if } j = i\neq r, \\
	K_iK_j, \quad & \text{if } |i - j| = 1,\ i\neq r, \\
	K_j, \quad & \text{otherwise}.
	\end{cases} \nonumber 
\end{align}

If $1\leq i\leq r-1$,
	\begin{align}
	\tau'_{i,e}(B_j) &= \begin{cases}
			-B_{\rho_n(i)}K_{\rho_n(i)}^e, & \text{if } i = j, \\
			-K_i^{e}B_i,  & \text{if } i =\rho_n(j), \\
			[B_i, B_j]_{-e}, \quad & \text{if } i\cdot j=-1, \\
			[B_j, B_{\rho_n(i)}]_{e}, \quad & \text{if } i\cdot\rho_n(j)=-1, \\
		B_j, \quad & \text{otherwise},
		\end{cases}	\nonumber \\	
   \tau''_{i,-e}(B_j) &= \begin{cases}
		-K_{i}^eB_{\rho_n(i)}, & \text{if } i = j, \\
		-B_i K_{\rho_n(i)}^{e},  & \text{if } i =\rho_n(j), \\
		[B_j, B_i]_{-e}, \quad & \text{if } i\cdot j=-1, \\
		[B_{\rho_n(i)}, B_j]_{e}, \quad & \text{if } i\cdot\rho_n(j)=-1, \\
		B_j, \quad & \text{otherwise}.
	\end{cases} \nonumber
		\end{align}
	
If $i=r$,	
	\begin{align}	
		\tau'_{r,e}(B_j) &= \begin{cases}
	q^{-e}[[B_{r-1},B_r]_e,B_{r+1}]_e-K_r^eB_{r-1}, & \text{if } j= r-1, \\
K_r^{e}B_r,  & \text{if } j=r, \\
	B_{r+1}K_{\rho_n(r)}^e, \quad & \text{if } j=r+1, \\
q^{-e}[[B_{r+2},B_{r+1}]_e,B_{r}]_e-B_{r+2}K_{\rho_n(r)}^e, \quad & \text{if } j=r+2, \\
	B_j, \quad & \text{otherwise},
\end{cases}	\nonumber \\
		\tau''_{r,-e}(B_j) &= \begin{cases}
	q^{-e}[B_{r+1},[B_r,B_{r-1}]_e]_e-K_{\rho_n(r)}^eB_{r-1}, & \text{if } j= r-1, \\
K_{\rho_n(r)}^{e}B_r, & \text{if } j=r, \\
	B_{r+1}K_{r}^e, \quad & \text{if } j=r+1, \\
	q^{-e}[B_{r},[B_{r+1},B_{r+2}]_e]_e-B_{r+2}K_{r}^e, \quad & \text{if } j=r+2, \\
	B_j, \quad & \text{otherwise}.
\end{cases}	\nonumber 
	\end{align}

There exist unique automorphisms $\tau'_{i,e}$ and $\tau''_{i,-e}$ $(i \in\{1,2,\cdots,r+1\})$ on $\bfU^\imath$ satisfying the following:
\begin{align}
	\tau'_{i,e}(K_j) &= \tau''_{i,-e}(K_j) = \begin{cases}
	K_{\rho_n(i)}, \quad & \text{if } i = j, \\
	K_iK_j, \quad & \text{if } |i - j| = 1,\ i\neq r+1, \\
	K_j, \quad & \text{otherwise}.
	\end{cases} \nonumber 
\end{align}

If $1\leq i\leq r$,
	\begin{align}
	\tau'_{i,e}(B_j) &= \begin{cases}
		-B_{\rho_n(i)}K_{\rho_n(i)}^e, & \text{if } i = j, \\
		-K_i^{e}B_i,  & \text{if } i =\rho_n(j), \\
		[B_i, B_j]_{-e}, \quad & \text{if } i\cdot j=-1,\ \rho_n(i)\cdot j\neq -1, \\
		[B_j, B_{\rho_n(i)}]_{e}, \quad & \text{if }i\cdot j\neq -1,\ \rho_n(i)\cdot j=-1, \\
		[B_{r},[B_{r+1},B_{r+2}]_e]_{-e}+B_{r+1}K_{\rho_n(r)}^e, \quad & \text{if }i\cdot j= -1,\ \rho_n(i)\cdot j=-1, \\
		B_j, \quad & \text{otherwise},
	\end{cases}	\nonumber \\	
	\tau''_{i,-e}(B_j) &= \begin{cases}
	-K_{i}^eB_{\rho_n(i)}, & \text{if } i = j, \\
	-B_iK_{\rho_n(i)}^{e},  & \text{if } i =\rho_n(j), \\
	[B_j, B_i]_{-e}, \quad & \text{if } i\cdot j=-1,\ \rho_n(i)\cdot j\neq -1, \\
	[B_{\rho_n(i)}, B_j]_{e}, \quad & \text{if }i\cdot j\neq -1,\ \rho_n(i)\cdot j=-1, \\
	[[B_{r},B_{r+1}]_e,B_{r+2}]_{-e}+B_{r+1}K_{\rho_n(r)}^e, \quad & \text{if }i\cdot j= -1,\ \rho_n(i)\cdot j=-1, \\
	B_j, \quad & \text{otherwise}.
\end{cases}	 \nonumber
\end{align}

If $i=r+1$,	
\begin{align}	
	\tau'_{r+1,e}(B_j) &= \begin{cases}
		[B_{r+1},B_{r}]_{-e}, & \text{if } j= r, \\
	[B_{r+2},B_{r+1}]_e, \quad & \text{if } j=r+2, \\	
		B_j, \quad & \text{otherwise},
	\end{cases}	\nonumber \\
	\tau''_{r+1,-e}(B_j) &= \begin{cases}
		[B_{r+1},B_{r}]_{e}, & \text{if } j= r, \\
		[B_{r+2},B_{r+1}]_{-e}, \quad & \text{if } j=r+2, \\	
		B_j, \quad & \text{otherwise}.
	\end{cases}	\nonumber 
\end{align}

Let $\mathbb I=\{1,2,\cdots,[\frac{n+1}{2}]\}$.
The automorphisms  $\{ \tau'_{i,e} \}_{i \in\mathbb I}$ and $\{ \tau''_{i,-e} \}_{i \in\mathbb I}$ are inverse each other, i.e., $\tau'_{i,e}\tau''_{i,-e}=\tau''_{i,-e}\tau'_{i,e}=\mathrm{id}$.
	Moreover, $\{ \tau'_{i,e} \}_{i \in\mathbb I}$ and $\{ \tau''_{i,-e} \}_{i \in\mathbb I}$ satisfy  braid relations of type $B_r$ or $B_{r+1}$, i.e., the following relations hold:
	\begin{align*}
		&\tau_{i-1}\tau_i\tau_{i-1}=\tau_i\tau_{i-1}\tau_i, \quad \text{if } 2 \leq i \leq [\frac{n+1}{2}]-1,\\
		&\tau_{i-1}\tau_i\tau_{i-1}\tau_i=\tau_i\tau_{i-1}\tau_i\tau_{i-1}, \quad \text{if } i=[\frac{n+1}{2}],\\
		&\tau_i\tau_j=\tau_j\tau_j, \quad \text{if } |i-j|\neq 1,
	\end{align*}
	where $\tau_i:=\tau_{i,e}'$ or $\tau_{i,-e}''$.
\end{prop}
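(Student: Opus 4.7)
The plan is to follow the strategy of \cite[Section 4.5]{KP11} closely, since the present proposition differs from Kolb--Pellegrini's theorems only in the specific normalization $\varsigma_i=\delta_{i,r}+q^{-1}\delta_{i,r+1}$ of the structure constants in the Serre relation. The first step is to show that the prescribed values of $\tau'_{i,e}$ (and separately $\tau''_{i,-e}$) on the generators $\{B_j,K_j\}$ extend to a well-defined algebra endomorphism of $^\imath\bfU(\mathcal{S})$. Since $^\imath\bfU(\mathcal{S})$ is presented by the relations given in its definition, this reduces to a finite case-check: for each defining relation (the toral relations $K_iK_{\rho_n(i)}=1$ and the $K_i$--$B_j$ commutations, the quadratic commutator when $i\cdot j=0$, and the Serre-type cubic relation when $i\cdot j=-1$) one substitutes the proposed images and verifies that the relation still holds. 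The cases split according to how many of $j,\rho_n(j)$ lie in the support of $\tau'_{i,e}$, namely $\{i-1,i,i+1,\rho_n(i)-1,\rho_n(i),\rho_n(i)+1\}$; uniqueness is automatic because the values are prescribed on a generating set.

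To cut the workload, I would invoke the anti-automorphisms $\Omega$ and $\Psi$ of Proposition \ref{prop:autoU}: an identity verified for $\tau'_{i,e}$ translates into its counterpart for $\tau''_{i,-e}$ under the symmetries $q\leftrightarrow q^{-1}$ and $B_i\leftrightarrow B_{\rho_n(i)}$, which roughly halves the calculation. The inverse claim $\tau'_{i,e}\tau''_{i,-e}=\tau''_{i,-e}\tau'_{i,e}=\mathrm{id}$ is then checked on generators: for most $j$ both factors act trivially, and for the few nontrivial $j$ one collapses a double quantum bracket back to the original generator using the Serre relation together with the $\varsigma_i$ correction.

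For the braid relations, commutativity $\tau_i\tau_j=\tau_j\tau_i$ for $|i-j|>1$ is immediate because the two operators have disjoint supports on generators. The cubic relation $\tau_{i-1}\tau_i\tau_{i-1}=\tau_i\tau_{i-1}\tau_i$ for interior $i$ is the standard type $A$ verification: apply both sides to each generator and, after the identification $K_iK_{\rho_n(i)}=1$, reduce the comparison to $q$-Jacobi-type identities for iterated brackets of four consecutive $B$'s. The quartic relation at $i=[\frac{n+1}{2}]$ encodes the behavior at the special node $r$ (type \RomanNumeralCaps{1}) or $r+1$ (type \RomanNumeralCaps{2}); both sides expand into long expressions in iterated $q$-brackets of $B_{r-1},B_r,B_{r+1},B_{r+2}$ carrying $K_r^e$ and $K_{\rho_n(r)}^e$ coefficients, and these must be simplified using the $\varsigma_i$-corrected Serre relation.

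I expect the main obstacle to be exactly this quartic braid relation at the special node, because this is where the normalization $\varsigma_i$ enters nontrivially and where the most complicated iterated brackets must be resolved. The plan is to mirror the corresponding computation in \cite[Section 4.5]{KP11} essentially line by line, keeping careful track of how the modified $\varsigma_i$ propagates through each simplification step; everything else is bookkeeping.
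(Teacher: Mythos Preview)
Your proposal is correct and matches the paper's approach exactly: the paper provides no independent proof but simply records that this proposition is a slight coefficient modification of \cite[Theorems 4.3--4.7]{KP11} and that the proof is identical to the one in \cite[Section 4.5]{KP11}. Your plan to rerun the Kolb--Pellegrini verification while tracking the normalization $\varsigma_i=\delta_{i,r}+q^{-1}\delta_{i,r+1}$ is precisely what is being invoked.
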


\begin{prop}
The automorphism $\tau_i$ commutes with $\Omega$, i.e., $\tau_i\circ\Omega=\Omega\circ\tau_i$.
\end{prop}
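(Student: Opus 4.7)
The plan is to invoke the uniqueness statement in Proposition \ref{prop:braidU}. Since $\Omega$ fixes each generator under squaring and $\Omega(q)=q^{-1}$ squares to the identity, one has $\Omega^2=\mathrm{id}$; moreover, because $\Omega$ is an anti-automorphism, the conjugate $\Omega\circ\tau_i\circ\Omega$ is again an algebra automorphism of $^{\imath}\bfU(\mathcal{S})$. Hence proving $\tau_i\circ\Omega=\Omega\circ\tau_i$ is equivalent to proving $\Omega\circ\tau_i\circ\Omega=\tau_i$, which by the uniqueness of $\tau_i$ reduces to a finite check on the generators $B_j$ and $K_j$.

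The central computational tool is the identity
\[
\Omega([X,Y]_a) \;=\; [\Omega(Y),\,\Omega(X)]_{-a},\qquad a\in\{1,-1\},
\]
immediate from $\Omega(xy)=\Omega(y)\Omega(x)$ and $\Omega(q)=q^{-1}$. Combined with $\Omega(B_j)=B_{\rho_n(j)}$ and $\Omega(K_j)=K_{\rho_n(j)}$, it carries the formula for $\tau_i(B_{\rho_n(j)})$ precisely to the formula for $\tau_i(B_j)$. I would then walk through the cases of Proposition \ref{prop:braidU}, namely $j=i$, $j=\rho_n(i)$, $i\cdot j=-1$, $i\cdot\rho_n(j)=-1$, and "otherwise", and check that each case pairs with its $\rho_n$-counterpart. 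For instance, $\Omega(\tau'_{i,e}(B_i))=\Omega(-B_{\rho_n(i)}K_{\rho_n(i)}^e)=-K_i^{e}B_i=\tau'_{i,e}(B_{\rho_n(i)})=\tau'_{i,e}(\Omega(B_i))$. All the relevant defining conditions (adjacency, fixed points, values of $i\cdot j$) are $\rho_n$-symmetric, so the case partition is preserved; the verification on $K_j$ follows immediately from $K_iK_{\rho_n(i)}=1$ and the $\rho_n$-symmetry of adjacency on the Dynkin diagram.

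The main obstacle will be the exceptional cases at $i=r$ in $\bfU^\jmath$ (and analogously $i=r+1$ in $\bfU^\imath$), where the formulas contain nested double $q$-brackets such as $q^{-e}[[B_{r-1},B_r]_e,B_{r+1}]_e$. Applying the key identity twice transforms $\Omega(\tau'_{r,e}(B_{r+2}))$ into $q^e[B_{r+1},[B_r,B_{r-1}]_{-e}]_{-e}-K_r^eB_{r-1}$, and I would verify that this matches $\tau'_{r,e}(B_{r-1})$ by direct four-term monomial expansion, crucially exploiting the commutativity $[B_{r-1},B_{r+1}]=0$ coming from non-adjacency in the Dynkin diagram. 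The Cartan piece $-K_r^eB_{r-1}$ matches its image under $\Omega$ using $\Omega(K_r)=K_{\rho_n(r)}$. Finally, the assertion for $\tau''_{i,-e}$ follows from that for $\tau'_{i,e}$ via the inversion relation $\tau''_{i,-e}=(\tau'_{i,e})^{-1}$ of Proposition \ref{prop:braidU}, since commutation with $\Omega$ is preserved under inversion.
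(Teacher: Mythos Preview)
Your proposal is correct and follows essentially the same approach as the paper: a direct verification on generators, with the nested-bracket case at $i=r$ handled by monomial expansion (the paper writes out precisely the computation $\tau'_{r,e}\circ\Omega(B_{r+2})=\Omega\circ\tau'_{r,e}(B_{r+2})$ and declares the rest similar). Your added organizational devices---the identity $\Omega([X,Y]_a)=[\Omega(Y),\Omega(X)]_{-a}$, the reformulation $\Omega\circ\tau_i\circ\Omega=\tau_i$, and deducing the $\tau''_{i,-e}$ case from inversion---make the ``other cases'' more transparent, but the core method is the same; one minor point is that the commutativity $[B_{r-1},B_{r+1}]=0$ is not actually needed, since both double brackets already expand to the same four monomials.
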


\begin{proof}
	We only prove  the formula $\tau'_{r,e}\circ \Omega(B_{r+2})=\Omega  \circ \tau'_{r,e}(B_{r+2})$ for $\bfU^\jmath$. By Proposition \ref{prop:autoU} and Proposition \ref{prop:braidU}, we have
	\begin{align*}
		&\tau'_{r,e}\circ \Omega(B_{r+2})=\tau'_{r,e}	(B_{r-1})\\
		=&q^{-e}[[B_{r-1},B_r]_e,B_{r+1}]_e-K_r^{e}B_{r-1}\\
		=&q^{-e}((B_{r-1}B_r-q^eB_rB_{r-1})B_{r+1}-q^{e}B_{r+1}(B_{r-1}B_r-q^eB_rB_{r-1})-K_r^{e}B_{r-1}\\
		=&q^{-e}B_{r-1}B_{r}B_{r+1}-B_{r}B_{r-1}B_{r+1}-B_{r+1}B_{r-1}B_{r}+q^eB_{r+1}B_{r}B_{r-1}-K_r^{e}B_{r-1}\\
		=&\Omega(q^{-e}(B_{r+2}B_{r+1}-q^{e}B_{r+1}B_{r+2})B_{r}-B_{r}(B_{r+2}B_{r+1}-q^{e}B_{r+1}B_{r+2})-B_{r+2}K_r^{-e})\\
			=&\Omega(q^{-e}[[B_{r+2},B_{r+1}]_e,B_r]_e-B_{r+2}K_r^{-e})\\
		=&\Omega\circ \tau'_{r,e} (B_{r+2}).
	\end{align*}
 The proof of other cases is similar. 
\end{proof}

\section{Braid group action on modified $q$-Weyl algebra}\label{sec:qWeyl}

\begin{df}[{\cite[Definition 3.1]{FGH}}]\label{modified q-Weyl algebra}
	The modified $q$-Weyl algebra $\mathbf{A}_q(\mathcal{S})$ associated with Satake diagram \RomanNumeralCaps{1} (resp. \RomanNumeralCaps{2}) is generated by $\mathfrak d_i$, $\mathfrak x_i$, $\mathfrak m_i$ ($i\in \{1,2,\cdots,r,r+1\}$) over $\mathbb{Q}(q)$  subject to the following relations:
	\begin{align}
		&\mathfrak m_i\mathfrak m_i^{-1}=\mathfrak m_i^{-1}\mathfrak m_i=1,\
		\mathfrak m_i\mathfrak m_j=\mathfrak m_j\mathfrak m_i,\label{mimj}\\
		&\mathfrak d_i\mathfrak m_j=\mathfrak m_j\mathfrak d_i,\
		\mathfrak x_i\mathfrak m_j=\mathfrak m_j\mathfrak x_i,\
		\mathfrak d_i\mathfrak x_j=\mathfrak x_j\mathfrak d_i,\ \ \text{if $i\neq j$},\\	
		&\mathfrak d_i\mathfrak d_j=\mathfrak d_j\mathfrak d_i,\ \mathfrak x_i\mathfrak x_j=\mathfrak x_j\mathfrak x_i,\\
		&\mathfrak d_i\mathfrak m_i=q^{1+\delta_{i,\rho_n(r)}\delta_{i,r+1}}\mathfrak m_i\mathfrak d_i,\
		\mathfrak x_i\mathfrak m_i=q^{-1-\delta_{i,\rho_n(r)}\delta_{i,r+1}}\mathfrak m_i\mathfrak x_i,\label{dimiximi}\\
		&\mathfrak{d}_i\mathfrak{x}_i=\frac{q^{1+\delta_{i,\rho_n(r)}\delta_{i,r+1}}\mathfrak m_i-q^{1+\delta_{i,\rho_n(r)}\delta_{i,r+1}}\mathfrak m_i^{-1}}{q-q^{-1}},\ \mathfrak{x}_i\mathfrak{d}_i=\frac{\mathfrak m_i-\mathfrak m_i^{-1}}{q-q^{-1}}.\label{dixixidi}
	\end{align}

\end{df}

\begin{prop}\label{prop:autoAq}
	There exist two anti-automorphisms $\omega$, $\psi$ on $\mathbf A_q(\mathcal{S})$ defined by
	\begin{align*}
		&\omega(\mathfrak x_i)=\mathfrak d_i,\quad \omega(\mathfrak d_i)=\mathfrak x_i,\quad \omega(\mathfrak m_i)=\mathfrak m_i^{-1},\quad \omega(q)=q^{-1},\\
		&\psi(\mathfrak x_i)=(-1)^{i+1}\mathfrak x_i,\ \psi(\mathfrak d_i)=(-1)^{i}\mathfrak d_i,\ \psi(\mathfrak m_i)=q^{\delta_{i,r+1}(i\cdot\rho_n(i)-2\delta_{i,\rho_n(i)})-1}\mathfrak m_i^{-1},\ \psi(q)=q.
	\end{align*}

\end{prop}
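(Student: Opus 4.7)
The strategy is to verify that $\omega$ and $\psi$ extend from their generator-level definitions to well-defined algebra anti-homomorphisms, then to deduce bijectivity by checking that each is an involution. By the universal property of the presentation in Definition~\ref{modified q-Weyl algebra}, it suffices to show that each defining relation \eqref{mimj}--\eqref{dixixidi}, with products reversed in order, maps to a valid identity in $\mathbf A_q(\mathcal S)$.

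For $\omega$, the purely commuting relations \eqref{mimj} and the cross-commutativity relations for $i \neq j$ are preserved by inspection. For \eqref{dimiximi}, a representative computation gives $\omega(\mathfrak d_i\mathfrak m_i) = \mathfrak m_i^{-1}\mathfrak x_i$ and $\omega(q^{1+c}\mathfrak m_i\mathfrak d_i) = q^{-1-c}\mathfrak x_i\mathfrak m_i^{-1}$ with $c := \delta_{i,\rho_n(r)}\delta_{i,r+1}$; these are equal as a consequence of the partner relation $\mathfrak x_i\mathfrak m_i = q^{-1-c}\mathfrak m_i\mathfrak x_i$ in the presentation. The verification of \eqref{dixixidi} reduces to the elementary identity $(\mathfrak m_i^{-1} - \mathfrak m_i)/(q^{-1} - q) = (\mathfrak m_i - \mathfrak m_i^{-1})/(q - q^{-1})$, which converts $\omega$ of the right-hand side back into the prescribed form.

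For $\psi$, the signs $(-1)^{i+1}$ on $\mathfrak x_i$ and $(-1)^i$ on $\mathfrak d_i$ combine so that each monomial $\mathfrak x_i\mathfrak d_i$ or $\mathfrak d_i\mathfrak x_i$ picks up the factor $(-1)^{2i+1} = -1$ under $\psi$; this sign, together with the reversal of products and the scaling $\psi(\mathfrak m_i) = q^d\mathfrak m_i^{-1}$ (where $d := \delta_{i,r+1}(i\cdot\rho_n(i) - 2\delta_{i,\rho_n(i)}) - 1$), must reproduce \eqref{dixixidi}. The exponent $d$ is engineered precisely to balance the three relevant cases: $i \neq r+1$, $i = r+1$ in diagram \RomanNumeralCaps{1} (where $\rho_n(r+1) = r$, so $\delta_{i,\rho_n(i)} = 0$), and $i = r+1$ in diagram \RomanNumeralCaps{2} (where $\rho_n(r+1) = r+1$, so $\delta_{i,\rho_n(i)} = 1$ and $i \cdot \rho_n(i) = 2$).

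Bijectivity then follows immediately. On generators, $\omega^2 = \mathrm{id}$ and $\psi^2 = \mathrm{id}$: the interchanges and inversions under $\omega$ undo themselves, while under $\psi$ the signs $(-1)^{2(i+1)}$ and $(-1)^{2i}$ square to $+1$ and $\psi(q^d\mathfrak m_i^{-1}) = q^d \cdot q^{-d}\mathfrak m_i = \mathfrak m_i$. Since the square of an anti-homomorphism is an algebra endomorphism agreeing with the identity on a generating set, $\omega^2 = \psi^2 = \mathrm{id}$ on all of $\mathbf A_q(\mathcal S)$, so both maps are involutions and thus bijective. The main obstacle is the exponent bookkeeping for $\psi$ at the middle node $i = r+1$, where the $q$-powers in \eqref{dimiximi}--\eqref{dixixidi} depend subtly on the Satake diagram through the shift $\delta_{i,\rho_n(r)}\delta_{i,r+1}$, and one must check by hand that the prescribed $d$ produces precisely the right compensating power after the sign flip and the order reversal.
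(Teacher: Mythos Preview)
Your proposal is correct and follows the same approach the paper has in mind: the paper's own proof reads in its entirety ``The verifications of anti-automorphisms $\omega,~\psi$ are easy and will be skipped,'' so you have simply made explicit the direct relation-by-relation check and the involution argument that the authors omit. Your case analysis of the exponent $d$ at $i=r+1$ correctly recovers $d=-(1+\delta_{i,\rho_n(r)}\delta_{i,r+1})$, which is exactly what is needed for \eqref{dixixidi}.
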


\begin{proof}
	The verifications of anti-automorphisms $\omega,~\psi$ are easy and will be skipped.
\end{proof}

\begin{prop}[{\cite[Theorem 4.1]{FGH}}]\label{prop:UjtoAq}
There exists a $\mathbb Q(q)$-algebra homomorphism $\varphi: {^{\imath}\bfU(\mathcal{S})} \rightarrow \mathbf{A}_q(\mathcal{S})$ which is given by
	\begin{align}
B_i &= \begin{cases}
		\mathfrak x_{i+1}\mathfrak d_i, & \text{if } 1\leq i\leq r, \\
	\mathfrak x_{\rho_n(i)}\mathfrak d_{1+\rho_n(i)},  & \text{if } \rho_n(r)\leq i\leq \rho_n(1), \\
	\mathfrak x_{r+1}\mathfrak d_{r+1}, & \text{if } i=r+1,\ \rho_n(r+1)=r+1,
	\end{cases}	\nonumber \\	
K_i &= \begin{cases}
	q^{-\delta_{i,r}}\mathfrak m_i\mathfrak m_{i+1}^{-1}, & \text{if } 1\leq i\leq r=\rho_n(r+1), \\
	q^{\delta_{i,r+1}}\mathfrak m_{\rho_n(i)}^{-1}\mathfrak m_{1+\rho_n(i)}, & \text{if } r+1=\rho_n(r)\leq i\leq \rho_n(1),\\
	\mathfrak m_r\mathfrak m_{r+1}^{-1}, & \text{if } i=r=\rho_n(r+2).\\
	\end{cases} \nonumber
\end{align}
\end{prop}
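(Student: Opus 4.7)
The plan is to verify that the assignment $\varphi$ respects all the defining relations of ${}^\imath\bfU(\mathcal S)$; since ${}^\imath\bfU(\mathcal S)$ is given by generators and relations, this suffices for $\varphi$ to extend uniquely to an algebra homomorphism. For each defining relation I would compute both sides under $\varphi$ and reduce them to normal form in $\mathbf A_q(\mathcal S)$ using only the relations of Definition \ref{modified q-Weyl algebra}. It is useful to observe that $\varphi(B_i)$ always has the form $\mathfrak x_a \mathfrak d_b$ for two indices $a, b$ depending on $i$, and $\varphi(K_i)$ is, up to a power of $q$, a Laurent monomial in $\mathfrak m_k^{\pm 1}$; this keeps the case analysis systematic.

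I would first dispatch the Cartan-type relations. The identity $K_i K_{\rho_n(i)} = 1$ reduces, in each branch of the piecewise definition, to a telescoping product of the form $\mathfrak m_a \mathfrak m_a^{-1}$ (using \eqref{mimj}) together with compensating $q$-powers attached to $K_r$ and $K_{\rho_n(r)}$. For $K_i B_j = q^{j \cdot \rho_n(i) - j \cdot i} B_j K_i$, relation \eqref{dimiximi} dictates the scalar produced when each $\mathfrak m_k$ commutes past $\mathfrak x_k$ or $\mathfrak d_k$, so the net exponent is a sum of $\pm 1$ contributions according to which indices in $\varphi(B_j)$ coincide with those in $\varphi(K_i)$; a finite case check matches this with the bilinear form.

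Next I would tackle the commutation $B_j B_i - B_i B_j = \delta_{\rho_n(i), j}\frac{K_i - K_{\rho_n(i)}}{q - q^{-1}}$ under $i \cdot j = 0$. When the generator indices appearing in $\varphi(B_i)$ and $\varphi(B_j)$ are disjoint, both sides vanish by \eqref{mimj}--\eqref{dimiximi}. The genuine case is $j = \rho_n(i)$, where $\varphi(B_i)$ and $\varphi(B_{\rho_n(i)})$ share a single generator; applying \eqref{dixixidi} to that shared pair converts the commutator into a linear combination of $\mathfrak m_k - \mathfrak m_k^{-1}$ that, after scalar adjustment, agrees with $\varphi\bigl(\frac{K_i - K_{\rho_n(i)}}{q - q^{-1}}\bigr)$.

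The Serre-type relation with $i \cdot j = -1$ is the main obstacle. Expanding $\varphi(B_i)^2 \varphi(B_j) - (q + q^{-1}) \varphi(B_i)\varphi(B_j)\varphi(B_i) + \varphi(B_j)\varphi(B_i)^2$ and normalizing each monomial by pushing all $\mathfrak m_k$'s left and all $\mathfrak d_k$'s right via \eqref{dimiximi}--\eqref{dixixidi}, one expects generic cancellation. The two delicate subcases are: (i) the fixed point $i = \rho_n(i) = r+1$ of Satake diagram \RomanNumeralCaps{2}, where the overlap of $\mathfrak x_{r+1}$ and $\mathfrak d_{r+1}$ inside $\varphi(B_{r+1})^2$ triggers \eqref{dixixidi} and leaves a residual $\varphi(B_j)$ reproducing the $\delta_{i,\rho_n(i)} B_j$ correction; and (ii) the adjacent swapped pair $\{i, j\} = \{r, r+1\}$ in Satake diagram \RomanNumeralCaps{1}, where normal-form reduction leaves an $\mathfrak m_r, \mathfrak m_{r+1}$-dependent remainder which must be shown to equal $-(q + q^{-1}) \varphi(B_i)\bigl(q^{-1} \varsigma_i \varphi(K_i) + q^2 \varsigma_{\rho_n(i)} \varphi(K_{\rho_n(i)})\bigr)$. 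The asymmetric exponents $q^{-1}$ and $q^2$, together with $\varsigma_r = 1$ and $\varsigma_{r+1} = q^{-1}$, are precisely what the asymmetric factor $q^{1+\delta_{i,\rho_n(r)}\delta_{i,r+1}}$ in \eqref{dimiximi}--\eqref{dixixidi} (nontrivial only at index $r+1$ in Satake diagram \RomanNumeralCaps{1}) is designed to deliver. Careful bookkeeping of these $q$-powers is the bulk of the calculation; once completed, $\varphi$ extends to the required algebra homomorphism.
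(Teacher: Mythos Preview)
The paper does not supply a proof of this proposition: it is quoted verbatim from \cite[Theorem~4.1]{FGH} and immediately followed by the next definition, so there is nothing in the present paper to compare against. Your plan---verify each defining relation of ${}^\imath\bfU(\mathcal S)$ under $\varphi$ using the relations \eqref{mimj}--\eqref{dixixidi} of $\mathbf A_q(\mathcal S)$---is the only natural route and is correct in outline; this is presumably what \cite{FGH} does as well.

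One small imprecision worth tightening: in the commutation relation for $i\cdot j=0$, you say the images commute whenever ``the generator indices appearing in $\varphi(B_i)$ and $\varphi(B_j)$ are disjoint.'' In fact there are non-trivial overlaps even when $j\neq\rho_n(i)$; for instance $\varphi(B_i)=\mathfrak x_{i+1}\mathfrak d_i$ and $\varphi(B_{\rho_n(i-1)})=\mathfrak x_{i-1}\mathfrak d_i$ share the factor $\mathfrak d_i$, yet they still commute because the overlap involves only two copies of the \emph{same} operator (here $\mathfrak d_i$), and the $\mathfrak x$'s commute among themselves. So the correct dichotomy is not ``disjoint vs.\ overlapping indices'' but rather ``overlap of type $\mathfrak x_k$--$\mathfrak d_k$ at some index $k$ vs.\ not''; only the former triggers \eqref{dixixidi} and produces a nonzero commutator. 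With that refinement your case analysis goes through, and your identification of the two exceptional Serre subcases---the fixed node $i=r+1$ in diagram~\RomanNumeralCaps{2} yielding the $\delta_{i,\rho_n(i)}B_j$ term, and the pair $\{i,j\}=\{r,r+1\}$ in diagram~\RomanNumeralCaps{1} yielding the $\delta_{j,\rho_n(i)}$ correction---is exactly right.
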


Let $\mathbf A_q^\jmath$ (resp. $\mathbf A_q^\imath$) denote the modified $q$-Weyl algebra corresponding to Satake diagram \RomanNumeralCaps{1} (resp. \RomanNumeralCaps{2}).

\begin{thm}\label{thm:braidAq}
There exist automorphisms $T'_{i,e}$ and $T''_{i,-e}$ $(i \in\{1,2,\cdots,r\})$ on $\mathbf A_q^\jmath$ satisfying the following:	
\begin{align*}
	&T'_{i,e}(\mathfrak m_j)=T''_{i,-e}(\mathfrak m_j)=\begin{cases}
		\mathfrak m_i,  & \text{if } i\neq r,\ j=i+1,\\
		\mathfrak m_{i+1},  & \text{if } i\neq r,\ j=i,\\
		\mathfrak m_j,  & \text{otherwise},\\
	\end{cases}	
	\\
	&T'_{i,e}(\mathfrak d_j)=\begin{cases}
		q^{e}\mathfrak m_r^{-2e}\mathfrak d_{r+1},  & \text{if } i=r,\ j=i+1,\\
		q^{-2e}\mathfrak m_r^{-e}\mathfrak m_{r+1}^{-e}\mathfrak d_r,  & \text{if } i=j=r,\\
		-q^{-e}\mathfrak m_{i+1}^{-e}\mathfrak d_i,  & \text{if } i\neq r,\ j=i+1,\\
		\mathfrak m_i^{-e}\mathfrak d_{i+1},  & \text{if } i=j\neq r,\\
		\mathfrak d_j,  & \text{otherwise},
	\end{cases}	
	\\
	&T'_{i,e}(\mathfrak x_j)=\begin{cases}
		q^{-e}\mathfrak m_r^{2e}\mathfrak x_{r+1},  & \text{if } i=r,\ j=i+1,\\
		q^{e}\mathfrak m_r^{e}\mathfrak m_{r+1}^{e}\mathfrak x_r,  & \text{if } i=j=r,\\
		-q^{e}\mathfrak m_{i+1}^{e}\mathfrak x_i,  & \text{if } i\neq r,\ j=i+1,\\
		\mathfrak m_i^{e}\mathfrak x_{i+1},  & \text{if } i=j\neq r,\\
		\mathfrak x_j,  & \text{otherwise},
	\end{cases}	
	\\
&T''_{i,-e}(\mathfrak d_j)=\begin{cases}
	q^{-e}\mathfrak m_r^{2e}\mathfrak d_{r+1},  & \text{if } i=r,\ j=i+1,\\
	q^{2e}\mathfrak m_r^{e}\mathfrak m_{r+1}^{e}\mathfrak d_r,  & \text{if } i=j=r,\\
	\mathfrak m_{i+1}^{e}\mathfrak d_i,  & \text{if } i\neq r,\ j=i+1,\\
	-q^{e}\mathfrak m_i^{e}\mathfrak d_{i+1},  & \text{if } i=j\neq r,\\
	\mathfrak d_j,  & \text{otherwise},
\end{cases}	
\\
&T''_{i,-e}(\mathfrak x_j)=\begin{cases}
	q^{e}\mathfrak m_r^{-2e}\mathfrak x_{r+1},  & \text{if } i=r,\ j=i+1,\\
	q^{-e}\mathfrak m_r^{-e}\mathfrak m_{r+1}^{-e}\mathfrak x_r,  & \text{if } i=j=r,\\
\mathfrak m_{i+1}^{-e}\mathfrak x_i,  & \text{if } i\neq r,\ j=i+1,\\
		-q^{-e}\mathfrak m_i^{-e}\mathfrak x_{i+1},  & \text{if } i=j\neq r,\\
	\mathfrak x_j,  & \text{otherwise}.
\end{cases}	
\end{align*}

There exist automorphisms $T'_{i,e}$ and $T''_{i,-e}$ $(i \in\{1,2,\cdots,r+1\})$ on $\mathbf A_q^\imath$ satisfying the following:	
\begin{align*}
	&T'_{i,e}(\mathfrak m_j)=T''_{i,-e}(\mathfrak m_j)=\begin{cases}
		\mathfrak m_i,  & \text{if } j=i+1,\\
		\mathfrak m_{i+1},  & \text{if } j=i\neq r+1,\\
		\mathfrak m_j,  & \text{otherwise},\\
	\end{cases}	
	\\
	&T'_{i,e}(\mathfrak d_j)=\begin{cases}
	\mathfrak m_{r+1}^{-e}\mathfrak d_{r+1},  & \text{if } i=j=r+1,\\
		-q^{-e}\mathfrak m_{i+1}^{-e}\mathfrak d_i,  & \text{if } j=i+1,\\
		\mathfrak m_i^{-e}\mathfrak d_{i+1},  & \text{if } i=j\neq r+1,\\
		\mathfrak d_j,  & \text{otherwise},
	\end{cases}	
	\\
	&T'_{i,e}(\mathfrak x_j)=\begin{cases}
		q^{-e}\mathfrak m_{r+1}^{e}\mathfrak x_{r+1},  & \text{if } i=j=r+1,\\
		-q^{e}\mathfrak m_{i+1}^{e}\mathfrak x_i,  & \text{if } j=i+1,\\
		\mathfrak m_i^{e}\mathfrak x_{i+1},  & \text{if } i=j\neq r+1,\\
		\mathfrak x_j,  & \text{otherwise},
	\end{cases}	
	\\
	&T''_{i,-e}(\mathfrak d_j)=\begin{cases}
		\mathfrak m_{r+1}^{e}\mathfrak d_{r+1},  & \text{if } i=j=r+1,\\
		\mathfrak m_{i+1}^{e}\mathfrak d_i,  & \text{if }  j=i+1,\\
		-q^e\mathfrak m_i^{e}\mathfrak d_{i+1},  & \text{if } i=j\neq r+1,\\
		\mathfrak d_j,  & \text{otherwise},
	\end{cases}	
	\\
	&T''_{i,-e}(\mathfrak x_j)=\begin{cases}
		q^e\mathfrak m_{r+1}^{-e}\mathfrak x_{r+1},  & \text{if } i=j=r+1,\\
		\mathfrak m_{i+1}^{-e}\mathfrak x_i,  & \text{if } j=i+1,\\
		-q^{-e}\mathfrak m_i^{-e}\mathfrak x_{i+1},  & \text{if } i=j\neq r+1,\\
		\mathfrak x_j,  & \text{otherwise}.
	\end{cases}	
\end{align*}

Moreover, we have $T'_{i,e}T''_{i,-e}=T''_{i,-e}T'_{i,e}=\mathrm{id}$.
\end{thm}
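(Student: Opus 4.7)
The plan is to prove the theorem in two steps: first verify that each of the four prescribed formulas extends to a well-defined algebra endomorphism of $\mathbf A_q(\mathcal S)$ by checking the defining relations \eqref{mimj}--\eqref{dixixidi}, and then verify $T'_{i,e}\circ T''_{i,-e}=T''_{i,-e}\circ T'_{i,e}=\mathrm{id}$ by substitution on generators. The latter simultaneously settles the final assertion and upgrades the endomorphisms to automorphisms.

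The crucial structural observation for the first step is locality. Each map fixes every generator whose index lies outside a small set: for generic $i$ (i.e.\ $i\ne r$ in $\mathbf A_q^\jmath$ and $i\ne r+1$ in $\mathbf A_q^\imath$) only the generators indexed by $i$ and $i+1$ move, whereas at the boundary index every $\mathfrak m_j$ is fixed and only $\mathfrak d_j,\mathfrak x_j$ for $j\in\{r,r+1\}$ (resp.\ $j=r+1$) are rescaled by a power of $\mathfrak m$ and a scalar in $q$. Hence every defining relation not involving a moved generator is preserved trivially, and the remaining checks take place in a subalgebra of six or four generators. Each reduces, via \eqref{mimj}--\eqref{dixixidi}, to the absorption identity $\mathfrak m_k^{-e}\mathfrak d_\ell \mathfrak m_k^{e}=\mathfrak d_\ell$ for $k\ne\ell$ together with the $q$-commutations in \eqref{dimiximi}. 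The representative Weyl check for generic $i$ is
\begin{align*}
T'_{i,e}(\mathfrak d_i)\,T'_{i,e}(\mathfrak x_i)=\mathfrak m_i^{-e}\mathfrak d_{i+1}\mathfrak m_i^{e}\mathfrak x_{i+1}=\mathfrak d_{i+1}\mathfrak x_{i+1}=\frac{q\mathfrak m_{i+1}-q\mathfrak m_{i+1}^{-1}}{q-q^{-1}}=T'_{i,e}\!\left(\frac{q\mathfrak m_i-q\mathfrak m_i^{-1}}{q-q^{-1}}\right),
\end{align*}
using that $\mathfrak d_{i+1}$ commutes with $\mathfrak m_i$ and that $T'_{i,e}(\mathfrak m_i)=\mathfrak m_{i+1}$; the remaining pairings (Weyl relation with indices swapped, $q$-commutation, mixed commutativity) are equally short.

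At the boundary the bookkeeping is tighter. In $\mathbf A_q^\jmath$ at $i=r$, the factors $q^{\pm 2e}\mathfrak m_r^{\mp e}\mathfrak m_{r+1}^{\mp e}$ decorating $T'_{r,e}(\mathfrak d_r),T'_{r,e}(\mathfrak x_r)$ are engineered so that, after moving $\mathfrak d_r$ past $\mathfrak m_r^{e}$ (which picks up $q^{e}$ from \eqref{dimiximi}), the scalar cancels and the standard Weyl relation at $r$ survives; the factors $\mathfrak m_r^{\mp 2e}$ attached to the images of $\mathfrak d_{r+1},\mathfrak x_{r+1}$ play the analogous role for the shifted exponent $q^{2}$ at $r+1$ coming from $\delta_{r+1,\rho_n(r)}\delta_{r+1,r+1}=1$. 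In $\mathbf A_q^\imath$ at $i=r+1$ the scalar $q^{\mp e}$ on $\mathfrak x_{r+1},\mathfrak d_{r+1}$ is the fudge factor that compensates when $\mathfrak d_{r+1}$ is pushed past the same-index $\mathfrak m_{r+1}^{\pm e}$.

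For the second step, a direct generator-by-generator substitution settles the inversion. Each map sends a generator to at most a signed scalar times a two-letter monomial whose $\mathfrak m$-exponent has the sign of $e$ flipped under $T''$ compared with $T'$, so the compositions telescope via $\mathfrak m^{e}\mathfrak m^{-e}=1$ and $q^{a}q^{-a}=1$; the explicit $(-1)$ coefficients on $\mathfrak d_{i+1},\mathfrak x_{i+1}$ under $T'_{i,e}$ and on $\mathfrak d_i,\mathfrak x_i$ under $T''_{i,-e}$ appear on exactly one side of each composition and cancel in pairs. The principal obstacle throughout is the coefficient bookkeeping at the distinguished node, where the shifted exponent $1+\delta_{i,\rho_n(r)}\delta_{i,r+1}$ in \eqref{dimiximi}--\eqref{dixixidi} is what forces the asymmetric prefactors $q^{\pm 2e},q^{\pm e}$ in the statement; once these are matched, all other relations follow routinely.
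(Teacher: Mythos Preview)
Your proposal is correct and follows the same approach as the paper: verify that the prescribed assignments on generators respect the defining relations \eqref{mimj}--\eqref{dixixidi}, and then check mutual inversion on generators. The paper's proof carries out only the representative boundary checks $T'_{r,e}(\mathfrak d_{j_1}\mathfrak x_{j_2})$ for $j_1,j_2\in\{r,r+1\}$ in $\mathbf A_q^\jmath$ and omits the inversion verification entirely, so your write-up is in fact more complete; note, however, that your displayed Weyl check should read $\dfrac{q\,\mathfrak m_{i+1}-q^{-1}\mathfrak m_{i+1}^{-1}}{q-q^{-1}}$ (the second exponent is $-1$, consistent with the paper's own computations), a typo you appear to have inherited from the statement of \eqref{dixixidi}.
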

\begin{proof}
We need to show relations \eqref{mimj}--\eqref{dixixidi} hold under the action of operators $T'_{i,e}$ and $T''_{i,-e}$.	
We only consider $T_{r,e}'$ acting on  $\mathfrak {d}_{j_1}\mathfrak{x}_{j_2}$ $(j_1,j_2\in\{r,r+1\})$ for  $\mathbf A_q^\jmath$.
	
	If $j_1=j_2=r$, we have
	\begin{align*}
		T'_{r,e}(\mathfrak d_{r}\mathfrak x_{r})&=q^{-e}
		\mathfrak{m}_r^{-e}	\mathfrak{m}_{r+1}^{-e}
		\mathfrak{d}_{r}
		\mathfrak{m}_r^{e}\mathfrak{m}_{r+1}^{e}
		\mathfrak{x}_{r}=\mathfrak d_{r}\mathfrak x_{r}\\
		&=\frac{q\mathfrak m_{r}-q^{-1}\mathfrak m_{r}^{-1}}{q-q^{-1}}=
		T'_{r,e}(\frac{q\mathfrak m_{r}-q^{-1}\mathfrak m_{r}^{-1}}{q-q^{-1}}).
	\end{align*}
	
	If $j_1=j_2=r+1$, we have
	\begin{align*}
		T'_{r,e}(\mathfrak d_{r+1}\mathfrak x_{r+1})&=\mathfrak{m}_r^{-2e}\mathfrak{d}_{r+1}\mathfrak{m}_r^{2e}\mathfrak{x}_{r+1}=\mathfrak d_{r+1}\mathfrak x_{r+1}\\
		&=\frac{q^2\mathfrak m_{r+1}-q^{-2}\mathfrak m_{r+1}^{-1}}{q-q^{-1}}=
		T'_{r,e}(\frac{q^2\mathfrak m_{r+1}-q^{-2}\mathfrak m_{r+1}^{-1}}{q-q^{-1}}).
	\end{align*}
	
	If $j_1=r+1,~j_2=r$, we have
	\begin{align*}
		T'_{r,e}(\mathfrak d_{r+1}\mathfrak x_{r})&=q^{2e}\mathfrak m_r^{-2e}\mathfrak d_{r+1}\mathfrak m_r^{e}\mathfrak m_{r+1}^e\mathfrak x_r\\
		&=q^{2e}\mathfrak m_r^{e}\mathfrak m_{r+1}^{e}\mathfrak x_r\mathfrak m_r^{-2e}\mathfrak d_{r+1}
		=T'_{r,e}(\mathfrak x_{r}\mathfrak d_{r+1}).	
	\end{align*}

	If $j_1=r,~j_2=r+1$, we have
	\begin{align*}
		T'_{r,e}(\mathfrak d_{r}\mathfrak x_{r+1})&=q^{-3e}\mathfrak m_r^{-e}\mathfrak m_{r+1}^{-e}\mathfrak d_{r}\mathfrak m_r^{2e}\mathfrak x_{r+1}\\
		&=q^{-3e}\mathfrak m_r^{2e}\mathfrak x_{r+1}\mathfrak m_r^{-e}\mathfrak m_{r+1}^{-e}
		\mathfrak d_{r}=
		T'_{r,e}(\mathfrak x_{r+1}\mathfrak d_{r}).
	\end{align*}
 The verifications for the relations in $\mathbf A_q^\imath$ follows a similar approach.
\end{proof}

\begin{prop}
The automorphism $T_i:=T_{i,e}'$ or $T_{i,-e}''$ commutes with $\omega$, i.e., 
$
\omega\circ T'_{i}=T'_{i}\circ\omega
$.
\end{prop}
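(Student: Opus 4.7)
The plan is to verify $\omega\circ T_i=T_i\circ\omega$ by checking agreement on the generators $\mathfrak m_j,\mathfrak d_j,\mathfrak x_j$. Since $T_i$ is an algebra automorphism and $\omega$ an algebra anti-automorphism, both compositions $\omega\circ T_i$ and $T_i\circ\omega$ are anti-homomorphisms of $\mathbf A_q(\mathcal S)$, so agreement on the generating set forces agreement on the whole algebra. Moreover, by the last clause of Theorem \ref{thm:braidAq} we have $T''_{i,-e}=(T'_{i,e})^{-1}$; conjugating the identity $\omega\circ T'_{i,e}=T'_{i,e}\circ\omega$ by the inverse then yields the analogous relation for $T''_{i,-e}$ at no extra cost, so it suffices to treat $T:=T'_{i,e}$.

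On the $\mathfrak m$-generators the check is immediate, since $T$ permutes the $\mathfrak m_k$ among themselves and $\omega(\mathfrak m_k)=\mathfrak m_k^{-1}$. For $\mathfrak d_j$ and $\mathfrak x_j$ I would read off the formulas in Theorem \ref{thm:braidAq} and observe their $\omega$-symmetry: the expression $T(\mathfrak x_j)$ is obtained from $T(\mathfrak d_j)$ by swapping $\mathfrak d\leftrightarrow\mathfrak x$, inverting $q$, negating the $\mathfrak m$-exponents, and reversing products. Because $\omega(\mathfrak d_j)=\mathfrak x_j$ and $\omega(\mathfrak x_j)=\mathfrak d_j$, the desired identity $\omega T(\mathfrak d_j)=T(\mathfrak x_j)=T\omega(\mathfrak d_j)$ reduces to commuting a single $\mathfrak m_k^{\pm a}$ past an $\mathfrak x_k$ (or $\mathfrak d_k$) via the $q$-Heisenberg relation \eqref{dimiximi}; the resulting factor $q^{\mp a}$ or $q^{\mp 2a}$ balances exactly against the scalar prefactor in the $T$-formula.

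The main obstacle is the bookkeeping at the boundary index $i=r$ (for $\mathbf A_q^\jmath$) or $i=r+1$ (for $\mathbf A_q^\imath$), where the formulas carry mixed factors like $\mathfrak m_r^{\pm e}\mathfrak m_{r+1}^{\pm e}$ or a squared $\mathfrak m_{r+1}^{\pm 2e}$ arising at the self-paired node; here two commutations may be required, and the extra $q^{\pm 2}$ coming from $\mathfrak x_{r+1}\mathfrak m_{r+1}=q^{-2}\mathfrak m_{r+1}\mathfrak x_{r+1}$ must cancel the doubled prefactor. A representative check is $\omega T'_{r,e}(\mathfrak d_r)=q^{2e}\mathfrak x_r\mathfrak m_r^e\mathfrak m_{r+1}^e=q^e\mathfrak m_r^e\mathfrak m_{r+1}^e\mathfrak x_r=T'_{r,e}(\mathfrak x_r)=T'_{r,e}\omega(\mathfrak d_r)$, using $\mathfrak x_r\mathfrak m_r=q^{-1}\mathfrak m_r\mathfrak x_r$ from \eqref{dimiximi}. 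All remaining subcases reduce to strictly simpler instances of the same pattern.
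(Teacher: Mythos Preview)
Your proposal is correct and follows essentially the same approach as the paper: a direct verification on generators, with the representative computation $\omega\circ T'_{r,e}(\mathfrak d_r)=q^{2e}\mathfrak x_r\mathfrak m_r^{e}\mathfrak m_{r+1}^{e}=q^{e}\mathfrak m_r^{e}\mathfrak m_{r+1}^{e}\mathfrak x_r=T'_{r,e}(\mathfrak x_r)$ being exactly what the paper writes. Your added remarks---the reduction to $T'_{i,e}$ via $T''_{i,-e}=(T'_{i,e})^{-1}$, and the observation that the $T$-formulas for $\mathfrak x_j$ and $\mathfrak d_j$ are $\omega$-symmetric---are useful organizing observations that the paper leaves implicit.
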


\begin{proof}
 For $\mathbf A_q^\jmath$, we give the proof only for the formula $\omega\circ T'_{r,e}(d_{j})=T'_{r,e} \circ \omega(d_{j})$ such that $j=r,r+1$. 
\vskip 2mm	
	If $j=r$, we have
	\begin{align*}
		\omega\circ T'_{r,e}(\mathfrak d_r)=\omega(q^{-2e}\mathfrak m_r^{-e}\mathfrak m_{r+1}^{-e}\mathfrak d_r)=&q^{2e}\mathfrak x_r\mathfrak m_{r}^{e}\mathfrak m_{r+1}^{e}\\
		=&q^{e}\mathfrak m_{r}^{e}\mathfrak m_{r+1}^{e}\mathfrak x_r=T'_{r,e}(\mathfrak x_r)=T'_{r,e}\circ\omega(\mathfrak d_r).
	\end{align*}
	
	If $j=r+1$, we have
	\begin{align*}
		\omega\circ T'_{r,e}(\mathfrak d_{r+1})=\omega(q^e\mathfrak m_r^{-2e}\mathfrak d_{r+1})=q^{-e}\mathfrak x_{r+1}\mathfrak m_r^{2e}=T'_{r,e}(\mathfrak x_{r+1})=T'_{r,e}\circ\omega(\mathfrak d_{r+1}).
	\end{align*}
The proof for $\mathbf A_q^\imath$ is similar.
	\end{proof}

\begin{thm}
The following braid relations hold.
	\begin{align}
	&T_{i-1}T_iT_{i-1}=T_iT_{i-1}T_i, \quad \text{if } 2 \leq i \leq [\frac{n+1}{2}]-1,\label{eq:TTT}\\
	&T_{i-1}T_iT_{i-1}T_i=T_iT_{i-1}T_iT_{i-1}, \quad \text{if } i=[\frac{n+1}{2}],\label{eq:TTTT}\\
	&T_iT_j=T_jT_j, \quad \text{if } |i-j|\neq 1,\label{eq:TT}
\end{align}
where $T_i:=T_{i,e}'$ or $T_{i,-e}''$.
\end{thm}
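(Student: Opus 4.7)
The plan is to verify each of \eqref{eq:TTT}, \eqref{eq:TTTT}, and \eqref{eq:TT} by direct computation on the algebra generators $\mathfrak m_j, \mathfrak d_j, \mathfrak x_j$, exploiting two simplifications established earlier in the paper. First, $\omega$ is an anti-automorphism that commutes with every $T_i$ and interchanges $\mathfrak d_j \leftrightarrow \mathfrak x_j$, so any identity in the $\mathfrak d$-generators immediately yields the corresponding one in the $\mathfrak x$-generators after applying $\omega$. Second, Theorem \ref{thm:braidAq} gives $T''_{i,-e}=(T'_{i,e})^{-1}$, so once the relations hold for $T_i=T'_{i,e}$, inverting each side produces the identity for $T_i=T''_{i,-e}$. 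Hence the task reduces to testing each of \eqref{eq:TTT}--\eqref{eq:TT} on $\mathfrak m_j$ and $\mathfrak d_j$ with $T_i = T'_{i,e}$ throughout.

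For \eqref{eq:TT}, I would first note that each $T'_{i,e}$ acts nontrivially only on generators whose index lies in $\{i,i+1\}$ (with the exception of $T'_{r+1,e}$ on $\mathbf A_q^\imath$, which affects only index $r+1$, and $T'_{r,e}$ on $\mathbf A_q^\jmath$, which still affects $\{r,r+1\}$). When $|i-j|\geq 2$ these index sets are disjoint, and generators of $\mathbf A_q(\mathcal{S})$ with disjoint indices commute by \eqref{mimj}--\eqref{dixixidi}. Consequently the two operators act on mutually commuting subalgebras, and \eqref{eq:TT} is verified by a one-line check on each generator.

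For \eqref{eq:TTT}, only the generators indexed by $\{i-1,i,i+1\}$ are moved by either side, so the check is local. On the $\mathfrak m_j$'s the prescribed action is merely the transposition of adjacent indices with coefficient $1$, so the three-letter braid identity is the standard one for adjacent transpositions in the symmetric group. On the $\mathfrak d_j$'s, each side reduces to an expression of the form $c_j\cdot(\text{monomial in the }\mathfrak m)\cdot \mathfrak d_{\sigma(j)}$, with $\sigma$ the length-three permutation. Matching the scalars and $\mathfrak m$-monomials produced by the two reduced words uses only the commutation rule \eqref{dimiximi} to move $\mathfrak m$-prefactors past $\mathfrak d$'s, together with the explicit signs $-q^{\pm e}$ appearing in the formulas of Theorem \ref{thm:braidAq}; I expect this to collapse to the classical type-$A$ braid identity in the $q$-Weyl algebra.

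The main obstacle is \eqref{eq:TTTT}, the type-$B$ relation involving the boundary operator $T'_{r,e}$ on $\mathbf A_q^\jmath$ (respectively $T'_{r+1,e}$ on $\mathbf A_q^\imath$). Here the prefactors carry $q^{\pm 2e}$ rather than $q^{\pm e}$ and mix the two generators $\mathfrak m_r,\mathfrak m_{r+1}$, so the bookkeeping is considerably heavier than in \eqref{eq:TTT}. The plan is to localize to indices $\{r-1,r,r+1\}$ (respectively $\{r,r+1\}$), use $\omega$ to halve the number of cases, and then evaluate each of the four compositions by repeatedly applying \eqref{dimiximi} to normalize the output into a canonical form (scalar)$\,\cdot\,$(monomial in the $\mathfrak m$'s)$\,\cdot\,\mathfrak d_{\sigma(j)}$. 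Matching the two normal forms at each $j$ is the only nontrivial step; it is tedious but mechanical, requiring no structural input beyond the defining relation \eqref{dimiximi} and careful tracking of the $q$-powers that accumulate as the $T'_{r,e}$ (resp.\ $T'_{r+1,e}$) operator is applied.
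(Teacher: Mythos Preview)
Your proposal is correct and follows essentially the same route as the paper: reduce to $T_i=T'_{i,e}$ via the inverse relation in Theorem~\ref{thm:braidAq}, then verify each braid relation directly on the generators, with the detailed four-term computation \eqref{eq:TTTT} localized to indices $\{r-1,r,r+1\}$. The only difference is that the paper carries out the $\mathfrak x_j$ computations explicitly rather than deducing them from the $\mathfrak d_j$ case via $\omega$, so your use of the anti-automorphism is a legitimate shortcut that halves the casework.
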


\begin{proof}
For $\mathbf A_q^{\jmath}$, we only show the proof for the relation in \eqref{eq:TTTT}, the proof for other relations is similar. It is enough to consider the case of $T_i=T'_{i,e}$ due to $T'_{i,e}T''_{i,-e}=T''_{i,-e}T'_{i,e}=\mathrm{id}$. We only need to show the following relations hold for $j\in \{r, r+1, r-1\}$.
\begin{align}
&T_{r-1}T_rT_{r-1}T_r(\mathfrak d_j)=T_rT_{r-1}T_rT_{r-1}(\mathfrak d_j),\label{Tdj}\\
&T_{r-1}T_rT_{r-1}T_r(\mathfrak x_j)=T_rT_{r-1}T_rT_{r-1}(\mathfrak x_j),\label{Txj}\\
&T_{r-1}T_rT_{r-1}T_r(\mathfrak m_j)=T_rT_{r-1}T_rT_{r-1}(\mathfrak m_j).\label{Tmj}
\end{align}

The verification of \eqref{Tmj} is easy, we omit it here.
\vskip 2mm
For \eqref{Tdj}, if $j=r$, we have
\begin{align*}
&	T_{r-1}T_rT_{r-1}T_r(\mathfrak d_r)-T_rT_{r-1}T_rT_{r-1}(\mathfrak d_r)	\\
=&T_{r-1}T_rT_{r-1}(q^{-2e}\mathfrak m_r^{-e}\mathfrak m_{r+1}^{-e}\mathfrak d_r)-T_rT_{r-1}T_{r}(-q^{-e}\mathfrak m_r^{-e}\mathfrak d_{r-1})\\
=&T_{r-1}T_r(-q^{-3e}\mathfrak m_{r-1}^{-e}\mathfrak m_{r+1}^{-e}\mathfrak m_{r}^{-e}\mathfrak d_{r-1})-T_rT_{r-1}(-q^{-e}\mathfrak m_r^{-e}\mathfrak d_{r-1})\\
=&T_{r-1}(-q^{-3e}\mathfrak m_{r-1}^{-e}\mathfrak m_{r+1}^{-e}\mathfrak m_{r}^{-e}\mathfrak d_{r-1})-T_r(-q^{-e}\mathfrak m_{r-1}^{-2e}\mathfrak d_{r})\\
=&-q^{-3e}\mathfrak m_{r}^{-e}\mathfrak m_{r-1}^{-2e}\mathfrak m_{r+1}^{-e}\mathfrak d_{r}+q^{-3e}\mathfrak m_{r}^{-e}\mathfrak m_{r-1}^{-2e}\mathfrak m_{r+1}^{-e}\mathfrak d_{r}=0.
\end{align*}

If $j=r+1$, we have
\begin{align*}
&	T_{r-1}T_rT_{r-1}T_r(\mathfrak d_{r+1})	-T_rT_{r-1}T_rT_{r-1}(\mathfrak d_{r+1})	\\
=&T_{r-1}T_rT_{r-1}(q^{e}\mathfrak m_r^{-2e}\mathfrak d_{r+1})-T_rT_{r-1}T_{r}(\mathfrak d_{r+1})\\
=&T_{r-1}T_r(q^{e}\mathfrak m_{r-1}^{-2e}\mathfrak d_{r+1})-T_rT_{r-1}(q^{e}\mathfrak m_r^{-2e}\mathfrak d_{r+1})\\
=&T_{r-1}(q^{2e}\mathfrak m_{r-1}^{-2e}\mathfrak m_{r}^{-2e}\mathfrak d_{r+1})-T_r(q^{e}\mathfrak m_{r-1}^{-2e}\mathfrak d_{r+1})\\
=&q^{2e}\mathfrak m_{r}^{-2e}\mathfrak m_{r-1}^{-2e}\mathfrak d_{r+1}-q^{2e}\mathfrak m_{r}^{-2e}\mathfrak m_{r-1}^{-2e}\mathfrak d_{r+1}=0.
\end{align*}

If $j=r-1$, we have
\begin{align*}
&	T_{r-1}T_rT_{r-1}T_r(\mathfrak d_{r-1})	-T_rT_{r-1}T_rT_{r-1}(\mathfrak d_{r-1})	\\
=&T_{r-1}T_rT_{r-1}(\mathfrak d_{r-1})-T_rT_{r-1}T_{r}(\mathfrak m_{r-1}^{-e}\mathfrak d_{r})\\
=&T_{r-1}T_r(\mathfrak m_{r-1}^{-e}\mathfrak d_{r})-T_rT_{r-1}(q^{-2e}\mathfrak m_{r-1}^{-e}\mathfrak m_{r}^{-e}\mathfrak m_{r+1}^{-e}\mathfrak d_{r})\\
=&T_{r-1}(q^{-2e}\mathfrak m_{r-1}^{-e}\mathfrak m_{r}^{-e}\mathfrak m_{r+1}^{-e}\mathfrak d_{r})-T_r(-q^{-3e}\mathfrak m_{r}^{-2e}\mathfrak m_{r-1}^{-e}\mathfrak m_{r+1}^{-e}\mathfrak d_{r-1})\\
=&-q^{-3e}\mathfrak m_{r}^{-2e}\mathfrak m_{r-1}^{-e}\mathfrak m_{r+1}^{-e}\mathfrak d_{r-1}+q^{-3e}\mathfrak m_{r}^{-2e}\mathfrak m_{r-1}^{-e}\mathfrak m_{r+1}^{-e}\mathfrak d_{r-1}=0.
\end{align*}

Hence  \eqref{Tdj} holds for $j=r, r+1, r-1$.
\vskip 2mm
For \eqref{Txj}, if $j=r$, we have
\begin{align*}
&	T_{r-1}T_rT_{r-1}T_r(\mathfrak x_r)	-	T_rT_{r-1}T_rT_{r-1}(\mathfrak x_r)	\\
=&T_{r-1}T_rT_{r-1}(q^{e}\mathfrak m_r^{e}\mathfrak m_{r+1}^{e}\mathfrak x_r)-T_rT_{r-1}T_{r}(-q^{e}\mathfrak m_r^{e}\mathfrak x_{r-1})\\
=&T_{r-1}T_r(-q^{2e}\mathfrak m_{r}^{e}\mathfrak m_{r-1}^{e}\mathfrak m_{r+1}^{e}\mathfrak x_{r-1})-T_rT_{r-1}(-q^{e}\mathfrak m_r^{e}\mathfrak x_{r-1})\\
=&T_{r-1}(-q^{2e}\mathfrak m_{r}^{e}\mathfrak m_{r-1}^{e}\mathfrak m_{r+1}^{e}\mathfrak x_{r-1})-T_r(-q^{e}\mathfrak m_{r-1}^{2e}\mathfrak x_{r})\\
=&-q^{2e}\mathfrak m_{r-1}^{2e}\mathfrak m_{r}^{e}\mathfrak m_{r+1}^{e}\mathfrak x_{r}+q^{2e}\mathfrak m_{r-1}^{2e}\mathfrak m_{r}^{e}\mathfrak m_{r+1}^{e}\mathfrak x_{r}=0.
\end{align*}

If $j=r+1$, we have

\begin{align*}
	&T_{r-1}T_rT_{r-1}T_r(\mathfrak x_{r+1})-		T_rT_{r-1}T_rT_{r-1}(\mathfrak x_{r+1})\\
	=&T_{r-1}T_rT_{r-1}(q^{-e}\mathfrak m_r^{2e}\mathfrak x_{r+1})-T_rT_{r-1}T_{r}(\mathfrak x_{r+1})\\
	=&T_{r-1}T_r(q^{-e}\mathfrak m_{r-1}^{2e}\mathfrak x_{r+1})-T_rT_{r-1}(q^{-e}\mathfrak m_r^{2e}\mathfrak x_{r+1})\\
	=&T_{r-1}(q^{-2e}\mathfrak m_{r-1}^{2e}\mathfrak m_{r}^{2e}\mathfrak x_{r+1})-T_r(q^{-e}\mathfrak m_{r-1}^{2e}\mathfrak x_{r+1})\\
	=&q^{-2e}\mathfrak m_{r}^{2e}\mathfrak m_{r-1}^{2e}\mathfrak x_{r+1}-q^{-2e}\mathfrak m_{r}^{2e}\mathfrak m_{r-1}^{2e}\mathfrak x_{r+1}=0.
\end{align*}

If $j=r-1$, we have
\begin{align*}
	&T_{r-1}T_rT_{r-1}T_r(\mathfrak x_{r-1})-	T_rT_{r-1}T_rT_{r-1}(\mathfrak x_{r-1})\\
	=&T_{r-1}T_rT_{r-1}(\mathfrak x_{r-1})-	T_rT_{r-1}T_{r}(\mathfrak m_{r-1}^{e}\mathfrak x_{r})\\
	=&T_{r-1}T_r(\mathfrak m_{r-1}^{e}\mathfrak x_{r})-	
	T_rT_{r-1}(q^{e}\mathfrak m_{r-1}^{e}\mathfrak m_{r}^{e}\mathfrak m_{r+1}^{e}\mathfrak x_{r})\\
	=&T_{r-1}(q^{e}\mathfrak m_{r-1}^{e}\mathfrak m_{r}^{e}\mathfrak m_{r+1}^{e}\mathfrak x_{r})-
	T_r(-q^{2e}\mathfrak m_{r}^{2e}\mathfrak m_{r-1}^{e}\mathfrak m_{r+1}^{e}\mathfrak x_{r-1})\\
	=&-q^{2e}\mathfrak m_{r}^{2e}\mathfrak m_{r-1}^{e}\mathfrak m_{r+1}^{e}\mathfrak x_{r-1}+
	q^{2e}\mathfrak m_{r}^{2e}\mathfrak m_{r-1}^{e}\mathfrak m_{r+1}^{e}\mathfrak x_{r-1}=0.
\end{align*}

Hence  \eqref{Txj} holds for $j\in \{r, r+1, r-1\}$.  We can similarly verify the braid relations \eqref{eq:TTT}--\eqref{eq:TT}  for $\mathbf A_q^{\imath}$, which gives the desired result.
\end{proof}

\begin{thm}\label{thm:Tphi=phitau}
We have the following intertwining relations
\begin{equation}
\omega\circ\varphi=\varphi\circ\Omega,\quad 
T_i\circ\varphi=\varphi\circ\tau_i.
\end{equation}
\end{thm}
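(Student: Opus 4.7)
The strategy is to check both identities directly on the generators $B_j,K_j$ of ${}^\imath\bfU(\mathcal S)$. Since $\omega,\Omega$ are anti-automorphisms and $\varphi$ is an algebra homomorphism, both $\omega\circ\varphi$ and $\varphi\circ\Omega$ are anti-homomorphisms ${}^\imath\bfU(\mathcal S)\to\mathbf A_q(\mathcal S)$; similarly, $T_i\circ\varphi$ and $\varphi\circ\tau_i$ are both genuine algebra homomorphisms. In both identities it therefore suffices to compare the two sides on generators.

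For $\omega\circ\varphi=\varphi\circ\Omega$, a short direct calculation suffices. For $1\le j\le r$,
\[
\omega(\varphi(B_j))=\omega(\mathfrak x_{j+1}\mathfrak d_j)=\mathfrak x_j\mathfrak d_{j+1}=\varphi(B_{\rho_n(j)})=\varphi(\Omega(B_j)),
\]
where the third equality uses the second branch of the definition of $\varphi$ in Proposition~\ref{prop:UjtoAq}. The remaining cases for $B_j$, the middle generator $B_{r+1}$ of $\bfU^\imath$, and each $K_j$ are parallel: $\omega$ swaps $\mathfrak m_a\mathfrak m_b^{-1}\leftrightarrow\mathfrak m_b\mathfrak m_a^{-1}$ while $\Omega$ swaps $j\leftrightarrow\rho_n(j)$, and the $q^{\pm\delta_{j,r}}$ prefactors in $\varphi(K_j)$ balance exactly under $\omega(q)=q^{-1}$.

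For $T_i\circ\varphi=\varphi\circ\tau_i$, I may assume $T_i=T'_{i,e}$ and $\tau_i=\tau'_{i,e}$ thanks to the mutual inversions $T'_{i,e}T''_{i,-e}=\mathrm{id}=\tau'_{i,e}\tau''_{i,-e}$. For each generator, the indices $j$ fall into three classes: (i) \emph{far} indices, where $\tau_i(B_j)=B_j$ and $T_i$ fixes $\varphi(B_j)$, so the identity is immediate; (ii) \emph{adjacent non-central} indices $j\in\{i,i\pm1,\rho_n(i),\rho_n(i)\pm1\}$ with $i\notin\{r,r+1\}$, where $\tau_i(B_j)$ is either $\pm B_kK_k^{\pm e}$ or a single skew commutator $[B_{k_1},B_{k_2}]_{\pm e}$, and the matching reduces to moving a factor $\mathfrak m_k^{\pm e}$ past a single $\mathfrak x_\ell$ or $\mathfrak d_\ell$ via \eqref{dimiximi}; (iii) \emph{central} indices near the node $r$. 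The generators $K_j$ are handled by exponent bookkeeping on $\varphi(K_j)=q^{\pm\delta}\mathfrak m_a\mathfrak m_b^{-1}$.

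The main obstacle is class (iii): for $\bfU^\jmath$ at $i=r$ with $j\in\{r-1,r+2\}$ one has $\tau'_{r,e}(B_{r-1})=q^{-e}[[B_{r-1},B_r]_e,B_{r+1}]_e-K_r^eB_{r-1}$, and analogous three-fold brackets arise for $\bfU^\imath$ at $i\in\{r,r+1\}$. The plan is to expand each iterated bracket, apply $\varphi$ term by term to obtain a sum of four monomials $\mathfrak x_a\mathfrak d_b\mathfrak x_c\mathfrak d_d$, normalize using \eqref{dimiximi}, and then invoke \eqref{dixixidi} at the overlap index to produce precisely the $\mathfrak m_r-\mathfrak m_r^{-1}$ contribution that cancels against $\varphi(-K_r^eB_{r-1})$. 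The surviving monomial should match $T'_{r,e}(\varphi(B_{r-1}))=T'_{r,e}(\mathfrak x_r\mathfrak d_{r-1})=q^e\mathfrak m_r^e\mathfrak m_{r+1}^e\mathfrak x_r\mathfrak d_{r-1}$ directly. The symmetric index $j=r+2$ then follows from the already-proved first identity $\omega\circ\varphi=\varphi\circ\Omega$ combined with the commutation of $T_i$ with $\omega$ and of $\tau_i$ with $\Omega$, so only one iterated bracket actually needs to be ground out.
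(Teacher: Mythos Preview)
Your proposal is correct and follows essentially the same approach as the paper: a direct verification on generators, with the central iterated-bracket case $i=r$, $j=r-1$ handled by expanding the bracket, applying $\varphi$ termwise, and collapsing via \eqref{dimiximi}--\eqref{dixixidi} to match $T'_{r,e}(\mathfrak x_r\mathfrak d_{r-1})=q^{e}\mathfrak m_r^{e}\mathfrak m_{r+1}^{e}\mathfrak x_r\mathfrak d_{r-1}$. Your use of the already-proved identity $\omega\circ\varphi=\varphi\circ\Omega$ together with the commutations $T_i\omega=\omega T_i$ and $\tau_i\Omega=\Omega\tau_i$ to deduce the $j=r+2$ case from $j=r-1$ is a clean labor-saving refinement that the paper does not exploit, but the underlying method is the same.
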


\begin{proof}
For $\bfU^\jmath$ and $\mathbf A_q^\jmath$,	we only prove the formula $T'_{i,e}\circ \varphi(B_{j})=\varphi\circ \tau'_{i,e}(B_{j})$, where $1\leq i,j\leq r$. The following calculations stem from Proposition \ref{prop:braidU}, Proposition \ref{prop:UjtoAq} and Theorem \ref{thm:braidAq}.
	
	If $i=j=r$, we have
	\begin{align*}
		T'_{r,e}\circ\varphi(B_{r})=T'_{r,e}(\mathfrak x_{r+1}\mathfrak d_{r})
		=q^{-e}\mathfrak m_r^{e}\mathfrak m_{r+1}^{-e}
			\mathfrak x_{r+1}\mathfrak d_{r}=\varphi(K_r^{e}B_{r})=\varphi\circ\tau'_{r,e}(B_{r}).
	\end{align*}
	
	If $i=r,~j=r-1$, we have
	\begin{align*}
		T'_{r,e}\circ \varphi(B_{r-1})=T'_{r,e}(\mathfrak x_{r}\mathfrak d_{r-1})=q^{e}\mathfrak m_r^{e}\mathfrak m_{r+1}^{e}\mathfrak x_{r}\mathfrak d_{r-1},
	\end{align*}
	and the right-hand side,
	\begin{align*}
		&\varphi\circ\tau'_{r,e}(B_{r-1})
		=\varphi(q^{-e}[[B_{r-1},B_{r}]_e,B_{r+1}]_e-K_r^{e}B_{r-1})\\
		=&\varphi(q^{-e}B_{r-1}B_{r}B_{r+1}-B_{r}B_{r-1}B_{r+1}-B_{r+1}B_{r-1}B_{r}+q^eB_{r+1}B_{r}B_{r-1}-K_r^{e}B_{r-1})\\
		=&q^{-e}\mathfrak x_{r}\mathfrak d_{r-1}\mathfrak x_{r+1}\mathfrak d_{r}\mathfrak x_{r}\mathfrak d_{r+1}-\mathfrak x_{r+1}\mathfrak d_{r}\mathfrak x_{r}\mathfrak d_{r-1}\mathfrak x_{r}\mathfrak d_{r+1}-\mathfrak x_{r}\mathfrak d_{r+1}\mathfrak x_{r}\mathfrak d_{r-1}\mathfrak x_{r+1}\mathfrak d_{r}\\
		&+q^e\mathfrak x_{r}\mathfrak d_{r+1}\mathfrak x_{r+1}\mathfrak d_{r}\mathfrak x_{r}\mathfrak d_{r-1}
		-q^{-e}\mathfrak m_{r}^{e}\mathfrak m_{r+1}^{-e}\mathfrak x_{r}\mathfrak d_{r-1}\\
		=&(q-q^{-1})^{-2}\big((q^{-e}(\mathfrak m_r-\mathfrak m_r^{-1})-(q\mathfrak m_r-q^{-1}\mathfrak m_r^{-1}))(\mathfrak m_{r+1}-\mathfrak m_{r+1}^{-1}) \\
		&-((q^{-1}\mathfrak m_r-q\mathfrak m_r^{-1})-q^e(\mathfrak m_r-\mathfrak m_r^{-1}))(q^2\mathfrak m_{r+1}-q^{-2}\mathfrak m_{r+1}^{-1})\big)\mathfrak x_r\mathfrak d_{r-1}-q^{-e}\mathfrak m_r^{e}\mathfrak m_{r+1}^{-e}\mathfrak x_r\mathfrak d_{r-1}\\
		=&\begin{cases}
		q\mathfrak m_r\mathfrak m_{r+1}\mathfrak x_r\mathfrak d_{r-1}, & \text{if}\ e=1, \\
		q^{-1}\mathfrak m_r^{-1}\mathfrak m_{r+1}^{-1}\mathfrak x_r\mathfrak d_{r-1}, & \text{if}\ e=-1.
		\end{cases}
	\end{align*}

Hence $	T'_{r,e}\circ \varphi(B_{r-1})=\varphi\circ\tau'_{r,e}(B_{r-1})$.
\vskip 2mm	
	If $i\ne r,~j=i+1$, we have
	\begin{align*}
		T'_{i,e}\circ\varphi(B_{i+1})=T'_{i,e}(\mathfrak x_{i+2}\mathfrak d_{i+1})=-q^{-e}\mathfrak m_{i+1}^{-e}\mathfrak d_i\mathfrak x_{i+2},
	\end{align*}
	and the right-hand side,
	\begin{align*}
		\varphi\circ\tau'_{i,e}(B_{i+1})
		=&\varphi([B_{i},B_{i+1}]_{-e})
		=\mathfrak x_{i+1}\mathfrak d_{i}\mathfrak x_{i+2}\mathfrak d_{i+1}-q^{-e}\mathfrak x_{i+2}\mathfrak d_{i+1}\mathfrak x_{i+1}\mathfrak d_{i}\\
		=&(q-q^{-1})^{-1}\mathfrak d_i\mathfrak x_{i+2}((1-q^{-e+1})\mathfrak m_{i+1}+(q^{-e-1}-1)\mathfrak m_{i+1}^{-1})\\
		=&	\begin{cases}
		-q^{-1}\mathfrak d_i\mathfrak x_{i+2}\mathfrak m_{i+1}^{-1},& \text{if}\ e=1, \\
	    -q\mathfrak d_i\mathfrak x_{i+2}\mathfrak m_{i+1}, & \text{if}\ e=-1.
		\end{cases}
	\end{align*}

Hence $T'_{i,e}\circ\varphi(B_{i+1})=\varphi\circ\tau'_{i,e}(B_{i+1})$.
\vskip 2mm	
	If $i=j\ne r$, we have
	\begin{align*}
		T'_{i,e}\circ\varphi(B_{i})=T'_{i,e}(\mathfrak x_{i+1}\mathfrak d_{i})=	-\mathfrak x_{i}\mathfrak d_{i+1} \mathfrak m_{i+1}^{e}
			\mathfrak m_{i}^{-e}
=\varphi\circ\tau'_{i,e}(B_{i}).
	\end{align*}
	
	If $i\ne r$ and $j=i-1$, we have
	\begin{align*}
		T'_{i,e}\circ\varphi(B_{i-1})=T'_{i,e}(\mathfrak x_{i}\mathfrak d_{i-1})=\mathfrak m_i^{e}\mathfrak x_{i+1}\mathfrak d_{i-1},
	\end{align*}
	and the right-hand side,
	\begin{align*}
		\varphi\circ\tau'_{i,e}(B_{i-1})&=\varphi([B_{i},B_{i-1}]_{-e})
		=\mathfrak x_{i+1}\mathfrak d_i\mathfrak x_i\mathfrak d_{i-1}-q^{-e}\mathfrak x_i\mathfrak d_{i-1}\mathfrak x_{i+1}\mathfrak d_i\\
		&=(q-q^{-1})^{-1}((q-q^{-e})\mathfrak m_i+(q^{-e}-q^{-1})\mathfrak m_i^{-1})\mathfrak x_{i+1}\mathfrak d_{i-1}\\
		&=\begin{cases}
		\mathfrak m_i\mathfrak x_{i+1}\mathfrak d_{i-1},& \text{if}\ e=1, \\
		\mathfrak m_i^{-1}\mathfrak x_{i+1}\mathfrak d_{i-1}, & \text{if}\ e=-1.
		\end{cases}
	\end{align*}

Hence $T'_{i,e}\circ\varphi(B_{i-1})=\varphi\circ\tau'_{i,e}(B_{i-1})$.
\vskip 2mm	
	The proof of other cases is similar and we will skip it.
\end{proof}

\begin{thm}
The automorphism $T_i$ is unique such that the intertwining relation $T_i\circ\varphi=\varphi\circ\tau_i$ and braid relations \eqref{eq:TTT}--\eqref{eq:TT} hold.
\end{thm}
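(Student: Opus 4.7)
My plan is to prove uniqueness by reducing the problem to classifying automorphisms of $\mathbf{A}_q(\mathcal{S})$ that fix the image $\varphi({^{\imath}\bfU(\mathcal{S})})$ pointwise, and then showing that the braid relations force such automorphisms to be trivial. Suppose $\{\tilde T_i\}$ is a second family of automorphisms satisfying the intertwining $\tilde T_i\circ\varphi=\varphi\circ\tau_i$ and the braid relations \eqref{eq:TTT}--\eqref{eq:TT}. Since $T'_{i,e}T''_{i,-e}=\mathrm{id}$ by Theorem \ref{thm:braidAq}, each $T_i$ is invertible, so one may set $\eta_i:=T_i^{-1}\circ\tilde T_i$; comparing the intertwining relations for $T_i$ and $\tilde T_i$ yields $\eta_i\circ\varphi=\varphi$, so $\eta_i$ is an algebra automorphism of $\mathbf{A}_q(\mathcal{S})$ fixing every element of $\varphi({^{\imath}\bfU(\mathcal{S})})$.

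I would next classify such $\eta:=\eta_i$. From $\eta(\varphi(K_j))=\varphi(K_j)$ we read off $\eta(\mathfrak m_j\mathfrak m_{j+1}^{-1})=\mathfrak m_j\mathfrak m_{j+1}^{-1}$. A PBW-type normal-form argument in $\mathbf{A}_q(\mathcal{S})$, combined with the $q$-commutation $\mathfrak d_j\mathfrak m_j^n=q^{n(1+\delta_{j,\rho_n(r)}\delta_{j,r+1})}\mathfrak m_j^n\mathfrak d_j$ from Definition \ref{modified q-Weyl algebra}, then forces $\eta(\mathfrak m_j)=c\,\mathfrak m_j$ for a single scalar $c$, and similarly $\eta(\mathfrak x_j)=u_j\mathfrak x_j$, $\eta(\mathfrak d_j)=v_j\mathfrak d_j$ for scalars $u_j,v_j$. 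The conditions $\eta(\mathfrak x_{j+1}\mathfrak d_j)=\mathfrak x_{j+1}\mathfrak d_j=\varphi(B_j)$ and $\eta(\mathfrak x_j\mathfrak d_{j+1})=\mathfrak x_j\mathfrak d_{j+1}=\varphi(B_{\rho_n(j)})$ yield $u_{j+1}v_j=u_jv_{j+1}=1$, while the fundamental relation $\mathfrak x_j\mathfrak d_j=(\mathfrak m_j-\mathfrak m_j^{-1})/(q-q^{-1})$ gives $u_jv_j=c=c^{-1}$. Hence $c\in\{\pm 1\}$ and there is a single scalar $u$ with $v_j=u^{-1}$; the group of candidate $\eta$'s is therefore parameterised by $(c,u)\in\{\pm 1\}\times\mathbb{Q}(q)^{\times}$.

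Finally, applying the braid relations to $\tilde T_i=T_i\circ\eta_i$: each scalar rescaling $\eta_i$ interacts with the $T_j$'s in a controlled way computable from the explicit formulas of Theorem \ref{thm:braidAq}, so \eqref{eq:TTT}--\eqref{eq:TT} translate into scalar equations linking the different pairs $(c_i,u_i)$. Evaluating these equations on well-chosen generators such as $\mathfrak x_{r+1},\mathfrak m_r,\mathfrak d_r$ (and their analogues for $\mathbf{A}_q^\imath$), and using that $\eta_i$ must fix the elements $\varphi(B_k)$, $\varphi(K_k)$, one obtains $c_i=1$ and $u_i=1$ uniformly in $i$; thus $\eta_i=\mathrm{id}$ and $\tilde T_i=T_i$. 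The principal obstacle is the classification step, specifically showing that $\eta(\mathfrak x_j)$ must be a scalar multiple of $\mathfrak x_j$ rather than $\mathfrak x_j\cdot f(\mathfrak m_j)$ for some non-constant Laurent polynomial $f$; this requires combining the full $q$-commutation data for $\mathfrak x_j$ against every $\mathfrak m_k$ with the fixed-product constraints, arguing via a PBW normal form in $\mathbf{A}_q(\mathcal{S})$ that only the constant term in $f$ can survive.
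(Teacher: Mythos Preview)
Your reduction to classifying automorphisms $\eta_i=T_i^{-1}\tilde T_i$ that fix $\varphi({^{\imath}\bfU(\mathcal S)})$ pointwise is a genuinely different strategy from the paper's. The paper instead writes out directly the equations $(\mathbf T'_{i,e}\mathfrak x_a)(\mathbf T'_{i,e}\mathfrak d_b)=\varphi\big(\tau'_{i,e}(\text{generator})\big)$ for all relevant pairs $a,b$, invokes the braid relations to pin down $\mathbf T'_{i,e}$ on generators with index far from $i$ (e.g.\ $\mathbf T'_{r,e}\mathfrak d_{r-2}=\mathfrak d_{r-2}$), and then solves the resulting system by successive cancellation.

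The serious gap in your proposal is the last step: the claim that the braid relations force $c_i=1$, $u_i=1$ is not justified, and in fact it fails. Consider the sign automorphism $\eta(\mathfrak x_j)=-\mathfrak x_j$, $\eta(\mathfrak d_j)=-\mathfrak d_j$, $\eta(\mathfrak m_j)=\mathfrak m_j$; this is precisely your candidate with $(c,u)=(1,-1)$. Every $\varphi(B_k)$ and $\varphi(K_k)$ is homogeneous of even total degree in the $\mathfrak x$'s and $\mathfrak d$'s, so $\eta\circ\varphi=\varphi$. Moreover, each $T_i$ in Theorem~\ref{thm:braidAq} sends every $\mathfrak x_j$ to a scalar times a monomial in the $\mathfrak m$'s times a single $\mathfrak x_k$ (and similarly for $\mathfrak d_j$), and therefore commutes with $\eta$. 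Setting $\tilde T_i:=T_i\circ\eta$ for every $i$ thus yields automorphisms satisfying $\tilde T_i\circ\varphi=\varphi\circ\tau_i$ and all the braid relations \eqref{eq:TTT}--\eqref{eq:TT} (both sides of each relation acquire the same power of $\eta$, since the two words have equal length), yet $\tilde T_i(\mathfrak x_j)=-T_i(\mathfrak x_j)\neq T_i(\mathfrak x_j)$. So the braid relations alone cannot eliminate this scalar freedom; your argument that ``evaluating these equations on well-chosen generators'' collapses $u_i$ to $1$ cannot go through without an additional hypothesis. You identify the classification of $\eta$ as the principal obstacle, but the actual obstruction lies later, in using the braid relations to kill the nontrivial $\eta$'s you have already found. (For what it is worth, the paper's own proof leaves the step ``from the braid relations one can show $\mathbf T'_{r,e}\mathfrak d_{r-2}=\mathfrak d_{r-2}$'' unexplained, and the same example shows it does not follow from the stated hypotheses alone.)
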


\begin{proof}
	We consider $\mathbf A_q^\jmath$, while skipping the similar proof for $\mathbf A_q^\imath$.	
For $1\leq i\leq r$, we denote by
\begin{equation*}
f_i=B_i,\quad e_i=B_{\rho_n(i)},\quad k_i=K_i, \quad k_i^{-1}=K_{\rho_n(i)}.
\end{equation*}
By Proposition \ref{prop:braidU}, we have 
	\begin{align}
	\tau'_{i,e}(f_j) &= \begin{cases}
		-e_ik_i^{-e}  & \text{if } j = i\neq r, \\
		k_i^{e}f_i  & \text{if } j = i= r, \\
		[f_i, f_j]_{-e} \quad & \text{if } |i - j| = 1,i\neq r \\
		q^{-e}[[f_j, f_{j+1}]_e,e_{j+1}]_e-k_{j+1}^ef_{j} \quad & \text{if } i=j+1=r,\\
		f_j \quad & \text{otherwise},
	\end{cases}\label{tauf}
\\
	\tau'_{i,e}(e_j) &= \begin{cases}
	-k_i^e f_i & \text{if } j = i\neq r, \\
	e_ik_i^{-e}  & \text{if } j = i= r, \\
	[e_j, e_i]_e \quad & \text{if } |i - j| = 1,i\neq r \\
	q^{-e}[[e_j, e_{j+1}]_e,f_{j+1}]_e-e_{j}k_{j+1}^{-e} \quad & \text{if } i=j+1=r,\\
	e_j \quad & \text{otherwise},
\end{cases} \label{taue}
\\
	\tau'_{i,e}(k_j) &= \begin{cases}
		k_i^{-1} \quad & \text{if } j = i\neq r, \\
		k_ik_j \quad & \text{if } |i - j| = 1, i\neq r \\
		k_j \quad & \text{otherwise}.
	\end{cases} \label{tauk}
\end{align}

We assume that there exists another $\mathbf T'_{i,e}:\mathbf{A}_q^\jmath\to \mathbf{A}_q^\jmath$ satisfying $\mathbf T'_{i,e}\circ\varphi=\varphi\circ\tau'_{i,e}$ and the braid relations \eqref{eq:TTT}--\eqref{eq:TT}. We first consider \eqref{tauf}, if $i=j+1=r$, we have $\mathbf T'_{r,e}\circ\varphi(f_{r-1})=\varphi\circ\tau'_{r,e}(f_{r-1})$. It follows that
\begin{align*}
\mathbf T'_{r,e}(\mathfrak x_{r}\mathfrak d_{r-1})
=&\varphi(q^{-e}[[f_{r-1}, f_{r}]_e,e_{r}]_e-k_{r}^ef_{r-1})\\
=&\varphi(q^{-e}f_{r-1}f_re_r-f_rf_{r-1}e_r-e_rf_{r-1}f_r+q^ee_rf_rf_{r-1}-k_r^ef_{r-1})\\
=&q^{-e}\mathfrak x_r\mathfrak d_{r-1}\mathfrak x_{r+1}\mathfrak d_r \mathfrak x_r\mathfrak d_{r+1}-\mathfrak x_{r+1}\mathfrak d_{r}\mathfrak x_{r}\mathfrak d_{r-1} \mathfrak x_r\mathfrak d_{r+1}-\mathfrak x_{r}\mathfrak d_{r+1}\mathfrak x_{r}\mathfrak d_{r-1} \mathfrak x_{r+1}\mathfrak d_{r}\\
&+q^e\mathfrak x_{r}\mathfrak d_{r+1}\mathfrak x_{r+1}\mathfrak d_{r} \mathfrak x_{r}\mathfrak d_{r-1}-{(q^{-1}\mathfrak m_r\mathfrak m_{r+1}^{-1})}^e\mathfrak x_{r}\mathfrak d_{r-1}\\
=&(q^{-e}\frac{\mathfrak m_r-\mathfrak m_r^{-1}}{q-q^{-1}}\frac{\mathfrak m_{r+1}-\mathfrak m_{r+1}^{-1}}{q-q^{-1}}-\frac{q\mathfrak m_r-q^{-1}\mathfrak m_r^{-1}}{q-q^{-1}}\frac{\mathfrak m_{r+1}-\mathfrak m_{r+1}^{-1}}{q-q^{-1}}\\
&-\frac{q^2\mathfrak m_{r+1}-q^{-2}\mathfrak m_{r+1}^{-1}}{q-q^{-1}}\frac{q^{-1}\mathfrak m_{r}-q\mathfrak m_{r}^{-1}}{q-q^{-1}}\\
&+
q^e\frac{\mathfrak m_{r}-\mathfrak m_{r}^{-1}}{q-q^{-1}}\frac{q^2\mathfrak m_{r+1}-q^{-2}\mathfrak m_{r+1}^{-1}}{q-q^{-1}}-{(q^{-1}\mathfrak m_r\mathfrak m_{r+1}^{-1})}^e)\mathfrak x_{r}\mathfrak d_{r-1}\\
=&
\begin{cases}
	q^{-1}\mathfrak m_r^{-1}\mathfrak m_{r+1}^{-1}\mathfrak x_r \mathfrak d_{r-1}, \quad & \text{if } e=-1 \\
		q \mathfrak m_r \mathfrak m_{r+1} \mathfrak x_r \mathfrak d_{r-1}, \quad & \text{if } e=1
	\end{cases}\\
=&q^{e}\mathfrak m_r^{e}\mathfrak m_{r+1}^{e}\mathfrak x_r \mathfrak d_{r-1}.
\end{align*}

If $i=j+1$, $i\neq r$,  we have
\begin{align*}
	\mathbf T'_{i,e}(\mathfrak x_{i}\mathfrak d_{i-1})
	=&\varphi(f_if_{i-1}-q^{-e}f_{i-1}f_i)\\
	=&\mathfrak x_{i+1}\mathfrak d_{i}\mathfrak x_i\mathfrak d_{i-1}-q^{-e}\mathfrak x_i\mathfrak d_{i-1}\mathfrak x_{i+1}\mathfrak d_{i}\\
	=&\frac{(q-q^{-e})\mathfrak m_i+(q^{-e}-q^{-1})\mathfrak m_i^{-1}}{q-q^{-1}}\mathfrak x_{i+1}\mathfrak d_{i-1}\\
	=&
	\begin{cases}
	\mathfrak m_i^{-1}\mathfrak x_{i+1}\mathfrak d_{i-1}, \quad & \text{if } e=-1 \\
	\mathfrak m_i\mathfrak x_{i+1}\mathfrak d_{i-1}, \quad & \text{if } e=1
	\end{cases}\\
	=&\mathfrak m_i^{e}\mathfrak x_{i+1}\mathfrak d_{i-1}.
\end{align*}

If $j=i+1$, $i\neq r$,  we have
\begin{align*}
	\mathbf T'_{i,e}(\mathfrak x_{i+2}\mathfrak d_{i+1})
	=&\varphi(f_if_{i+1}-q^{-e}f_{i+1}f_i)\\
	=&\mathfrak x_{i+1}\mathfrak d_{i}\mathfrak x_{i+2}\mathfrak d_{i+1}-q^{-e}\mathfrak x_{i+2}\mathfrak d_{i+1}\mathfrak x_{i+1}\mathfrak d_{i}\\
	=&\frac{(1-q^{1-e})\mathfrak m_{i+1}+(q^{-1-e}-1)\mathfrak m_{i+1}^{-1}}{q-q^{-1}}\mathfrak d_{i}\mathfrak x_{i+2}\\
	=&
	\begin{cases}
	-q	\mathfrak m_{i+1}\mathfrak d_{i}\mathfrak x_{i+2}, \quad & \text{if } e=-1 \\
	-q^{-1}	\mathfrak m_{i+1}^{-1}\mathfrak d_{i}\mathfrak x_{i+2}, \quad & \text{if } e=1
	\end{cases}\\
	=&-q^{-e}\mathfrak m_{i+1}^{-e}\mathfrak d_{i}\mathfrak x_{i+2}.
\end{align*}

It follows from the above calculations and \eqref{tauf} that
	\begin{align}\label{eq:TFxd}
	\mathbf T'_{i,e}(\mathfrak x_{j+1}\mathfrak d_{j}) &= \begin{cases}
		-\mathfrak x_i\mathfrak d_{i+1}{(\mathfrak m_i\mathfrak m_{i+1}^{-1})}^{-e}  & \text{if } j = i\neq r, \\
		{(q^{-1}\mathfrak m_i\mathfrak m_{i+1}^{-1})}^{e}\mathfrak x_{i+1}\mathfrak d_i  & \text{if } j = i= r, \\
		\mathfrak m_i^e\mathfrak x_{i+1}\mathfrak d_{i-1} \quad & \text{if } i=j+1,i\neq r \\
	-q^{-e}	\mathfrak m_{i+1}^{-e}\mathfrak d_{i}\mathfrak x_{i+2} \quad & \text{if } j=i+1,i\neq r \\
		q^e\mathfrak m_r^e\mathfrak m_{r+1}^e\mathfrak x_r \mathfrak d_{r-1} \quad & \text{if } i=j+1=r,\\
		\mathfrak x_{i+1}\mathfrak d_j \quad & \text{otherwise},
	\end{cases}
\end{align}

Using similar calculations on \eqref{taue} and \eqref{tauk}, we obtain
	\begin{align}\label{eq:TExd}
	\mathbf T'_{i,e}(\mathfrak x_{j}\mathfrak d_{j+1}) &= \begin{cases}
		-{(\mathfrak m_i\mathfrak m_{i+1}^{-1})}^{e}\mathfrak x_{i+1}\mathfrak d_i  & \text{if } j = i\neq r, \\
		\mathfrak x_i\mathfrak d_{i+1} {(q^{-1}\mathfrak m_i\mathfrak m_{i+1}^{-1})}^{-e} & \text{if } j = i= r, \\
		\mathfrak m_i^{-e}\mathfrak x_{i-1}\mathfrak d_{i+1} \quad & \text{if } i=j+1,i\neq r \\
		-q^{e}	\mathfrak m_{i+1}^{e}\mathfrak x_{i}\mathfrak d_{i+2} \quad & \text{if } j=i+1,i\neq r \\
		q^{-2e}\mathfrak m_r^{-e}\mathfrak m_{r+1}^{-e}\mathfrak x_{r-1} \mathfrak d_r \quad & \text{if } i=j+1=r,\\
		\mathfrak x_j\mathfrak d_{i+1} \quad & \text{otherwise},
	\end{cases}
\end{align}
and
	\begin{align}\label{eq:TKxd}
	\mathbf T'_{i,e}(q^{-\delta_{j,r}}\mathfrak m_j\mathfrak m_{j+1}^{-1}) &= \begin{cases}
		{(\mathfrak m_i\mathfrak m_{i+1}^{-1})}^{-1}\mathfrak x_{i+1}\mathfrak d_i  & \text{if } j = i\neq r, \\
		q^{-\delta_{j,r}}\mathfrak m_{r-1}\mathfrak m_r^{-1} \mathfrak m_j\mathfrak m_{j+1}^{-1} & \text{if } j =i+1=r,\\
		\mathfrak m_i\mathfrak m_{i+1}^{-1}\mathfrak m_{j}\mathfrak m_{j+1}^{-1} \quad & \text{if } |i-j|=1,i\neq r, j\neq r, \\
		q^{-\delta_{j,r}}\mathfrak m_j\mathfrak m_{j+1}^{-1} \quad & \text{otherwise}.
	\end{cases}
\end{align}

For $i=r$, by \eqref{eq:TFxd} and \eqref{eq:TExd}, we have the following system of equations
\begin{align}
&(\mathbf T'_{r,e}\mathfrak x_{r+1})(\mathbf T'_{r,e}\mathfrak d_r)={(q^{-1}\mathfrak m_i\mathfrak m_{i+1}^{-1})}^{e}\mathfrak x_{r+1}\mathfrak d_r,\label{eq:Txr+1Tdr}\\
&(\mathbf T'_{r,e}\mathfrak x_{r})(\mathbf T'_{r,e}\mathfrak d_{r-1})=q^e\mathfrak m_r^e\mathfrak m_{r+1}^e\mathfrak x_r \mathfrak d_{r-1},\\
&(\mathbf T'_{r,e}\mathfrak x_{r})(\mathbf T'_{r,e}\mathfrak d_{r+1})=\mathfrak x_r\mathfrak d_{r+1} {(q^{-1}\mathfrak m_r\mathfrak m_{r+1}^{-1})}^{-e},\\
&(\mathbf T'_{r,e}\mathfrak x_{r-1})(\mathbf T'_{r,e}\mathfrak d_{r})=q^{-2e}\mathfrak m_r^{-e}\mathfrak m_{r+1}^{-e}\mathfrak x_{r-1} \mathfrak d_r,\\
&(\mathbf T'_{r,e}\mathfrak x_{r-1})(\mathbf T'_{r,e}\mathfrak d_{r-2})=\mathfrak x_{r-1}\mathfrak d_{r-2},\\
&(\mathbf T'_{r,e}\mathfrak x_{r-2})(\mathbf T'_{r,e}\mathfrak d_{r-1})=\mathfrak x_{r-2}\mathfrak d_{r-1}.\label{eq:Txr-2Tdr-1}
\end{align}

For $i\neq r$,  by \eqref{eq:TFxd} and \eqref{eq:TExd} again, we have the following system of equations
\begin{align}
	&(\mathbf T'_{i,e}\mathfrak x_{i+1})(\mathbf T'_{i,e}\mathfrak d_i)=-\mathfrak x_i\mathfrak d_{i+1}{(\mathfrak m_i\mathfrak m_{i+1}^{-1})}^{-e},\label{eq:Txi+1Tdi}\\
	&(\mathbf T'_{i,e}\mathfrak x_{i})(\mathbf T'_{i,e}\mathfrak d_{i+1})=-{(\mathfrak m_i\mathfrak m_{i+1}^{-1})}^{e}\mathfrak x_{i+1}\mathfrak d_i,\\
    &(\mathbf T'_{i,e}\mathfrak x_{i})(\mathbf T'_{i,e}\mathfrak d_{i-1})=\mathfrak m_i^e\mathfrak x_{i+1}\mathfrak d_{i-1},\\
    &(\mathbf T'_{i,e}\mathfrak x_{i-1})(\mathbf T'_{i,e}\mathfrak d_{i})=\mathfrak m_i^{-e}\mathfrak x_{i-1}\mathfrak d_{i+1},\\
    &(\mathbf T'_{i,e}\mathfrak x_{i+2})(\mathbf T'_{i,e}\mathfrak d_{i+1})=-q^{-e}	\mathfrak m_{i+1}^{-e}\mathfrak d_{i}\mathfrak x_{i+2},\\
    &(\mathbf T'_{i,e}\mathfrak x_{i+1})(\mathbf T'_{i,e}\mathfrak d_{i+2})=-q^{e}	\mathfrak m_{i+1}^{e}\mathfrak x_{i}\mathfrak d_{i+2}.\label{eq:Txi+1Tdi+2}
\end{align}

From the braid relations in \eqref{eq:TTT}--\eqref{eq:TT}, one can show that
\begin{equation*}
\mathbf T'_{r,e}\mathfrak d_{r-2}=\mathfrak d_{r-2},\quad \mathbf T'_{r,e}\mathfrak x_{r-2}=\mathfrak x_{r-2}, \quad \mathbf T'_{i,e}\mathfrak x_{i+2}=\mathfrak x_{i+2}, \quad \mathbf T'_{i,e}\mathfrak d_{i+2}=\mathfrak d_{i+2}.
\end{equation*}

Therefore, by solving the system of equations \eqref{eq:Txr+1Tdr}--\eqref{eq:Txr-2Tdr-1} and \eqref{eq:Txi+1Tdi}--\eqref{eq:Txi+1Tdi+2}, we obtain
\begin{align*}
	&\mathbf T'_{i,e}(\mathfrak d_j)=\begin{cases}
		q^{e}\mathfrak m_r^{-2e}\mathfrak d_{r+1},  & \text{if } i=r,\ j=i+1,\\
		q^{-2e}\mathfrak m_r^{-e}\mathfrak m_{r+1}^{-e}\mathfrak d_r,  & \text{if } i=j=r,\\
		-q^{-e}\mathfrak m_{i+1}^{-e}\mathfrak d_i,  & \text{if } i\neq r,\ j=i+1,\\
		\mathfrak m_i^{-e}\mathfrak d_{i+1},  & \text{if } i=j\neq r,\\
		\mathfrak d_j,  & \text{otherwise},
	\end{cases}	
	\\
	&\mathbf T'_{i,e}(\mathfrak x_j)=\begin{cases}
		q^{-e}\mathfrak m_r^{2e}\mathfrak x_{r+1},  & \text{if } i=r,\ j=i+1,\\
		q^{e}\mathfrak m_r^{e}\mathfrak m_{r+1}^{e}\mathfrak x_r,  & \text{if } i=j=r,\\
		-q^{e}\mathfrak m_{i+1}^{e}\mathfrak x_i,  & \text{if } i\neq r,\ j=i+1,\\
		\mathfrak m_i^{e}\mathfrak x_{i+1},  & \text{if } i=j\neq r,\\
		\mathfrak x_j,  & \text{otherwise}.
	\end{cases}	
\end{align*}

Similarly, by \eqref{eq:TKxd}, we have
\begin{align*}
\mathbf	T'_{i,e}(\mathfrak m_j)=\begin{cases}
		\mathfrak m_i,  & \text{if } i\neq r,\ j=i+1,\\
		\mathfrak m_{i+1},  & \text{if } i\neq r,\ j=i,\\
		\mathfrak m_j,  & \text{otherwise}.
	\end{cases}	
\end{align*}

Hence $\mathbf T'_{i,e}$ coincide with the definition of $ T'_{i,e}$. By a similar approach, one can construct $\mathbf T''_{i,-e}: \mathbf{A}_q(\mathcal S)\to \mathbf{A}_q(\mathcal S)$ which is the same as $ T''_{i,-e}$. 

Summarizing the above, the theorem is proved.
\end{proof}

\section{Braid group action on polynomial ring}\label{sec:polynomial}
Let $\mathbb P:=\mathbb Q(q)[X_1,\cdots,X_{r},X_{r+1}]$ be a polynomial ring over $\mathbb Q(q)$.
\begin{thm}[{\cite[Theorem 3.2]{FGH}}]\label{thm:irreducible Aq module}
	The polynomial ring $\mathbb P$ is an irreducible $\mathbf{A}_q(\mathcal S)$-module with the following actions.
\begin{align*}\label{A_q action}
	{\mathfrak d_i}\mathbf X^{\mathbf a}=[(1+\delta_{i,\rho_n(r)}\delta_{i,r+1})a_i]\mathbf X^{\mathbf a-\mathbf {e}_i}
	,\quad{\mathfrak x_i}\mathbf X^{\mathbf a}=
	\mathbf X^{\mathbf a+\mathbf e_i}
	,\quad
	{\mathfrak m_i}\mathbf X^{\mathbf a}=
	q^{(1+\delta_{i,\rho_n(r)}\delta_{i,r+1})a_i}\mathbf X^{\mathbf a},
\end{align*}	
where $\mathbf X^{\mathbf a}=X_1^{a_1}\cdots X_{r+1}^{a_{r+1}}$ for $(a_1,\cdots,a_{r+1})\in\mathbb Z^{r+1}_{\geq 0}$ and $\mathbf e_i$ is the tuple such that the $i$-th element is $1$ and the other elements are $0$.
\end{thm}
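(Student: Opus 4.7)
The plan is to prove the statement in two stages: first verify that the displayed formulas define a well-defined $\mathbf A_q(\mathcal S)$-module structure on $\mathbb P$, and then deduce irreducibility by a minimal-degree argument.

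\emph{Stage 1 (module structure).} Abbreviate $\epsilon_i := 1 + \delta_{i,\rho_n(r)}\delta_{i,r+1}$, so that the prescribed actions become $\mathfrak d_i\mathbf X^{\mathbf a} = [\epsilon_i a_i]\mathbf X^{\mathbf a - \mathbf e_i}$, $\mathfrak x_i\mathbf X^{\mathbf a} = \mathbf X^{\mathbf a + \mathbf e_i}$, and $\mathfrak m_i\mathbf X^{\mathbf a} = q^{\epsilon_i a_i}\mathbf X^{\mathbf a}$. I would verify each defining relation of $\mathbf A_q(\mathcal S)$ by applying both sides to a general monomial $\mathbf X^{\mathbf a}$. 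The relations \eqref{mimj} and the cross-index commutations are immediate because operators at distinct indices act on disjoint variables. For the same-index relations, direct computation yields $\mathfrak d_i\mathfrak m_i\mathbf X^{\mathbf a} = q^{\epsilon_i a_i}[\epsilon_i a_i]\mathbf X^{\mathbf a - \mathbf e_i}$ and $\mathfrak m_i\mathfrak d_i\mathbf X^{\mathbf a} = q^{\epsilon_i(a_i - 1)}[\epsilon_i a_i]\mathbf X^{\mathbf a - \mathbf e_i}$, whose ratio is $q^{\epsilon_i}$, confirming \eqref{dimiximi}; the check for $\mathfrak x_i\mathfrak m_i$ is analogous. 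The Weyl-type relations \eqref{dixixidi} then reduce to the quantum-integer identity $[n] = (q^n - q^{-n})/(q - q^{-1})$, used with $n = \epsilon_i a_i$ for $\mathfrak x_i\mathfrak d_i$ and with a shifted index for $\mathfrak d_i\mathfrak x_i$.

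\emph{Stage 2 (irreducibility).} Let $N \subseteq \mathbb P$ be a nonzero submodule, and choose $f \in N \setminus \{0\}$ whose total degree $d$ is minimal among nonzero elements of $N$. Write $f = \sum_{\mathbf a} c_{\mathbf a}\mathbf X^{\mathbf a}$. If $d \geq 1$, pick $\mathbf X^{\mathbf a^*}$ with $c_{\mathbf a^*} \neq 0$ and $|\mathbf a^*| = d$, and choose an index $i$ with $a^*_i \geq 1$. Since distinct exponents $\mathbf a$ yield distinct shifted exponents $\mathbf a - \mathbf e_i$, the coefficient of $\mathbf X^{\mathbf a^* - \mathbf e_i}$ in $\mathfrak d_i f \in N$ equals precisely $c_{\mathbf a^*}[\epsilon_i a^*_i]$, which is nonzero in $\mathbb Q(q)$ because $a^*_i \geq 1$. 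Hence $\mathfrak d_i f$ is a nonzero element of $N$ of total degree at most $d - 1$, contradicting minimality. Therefore $d = 0$, so $N$ contains a nonzero constant and thus $1 \in N$. Since $\mathbf X^{\mathbf a} = \mathfrak x_1^{a_1}\cdots\mathfrak x_{r+1}^{a_{r+1}}\cdot 1$ for every $\mathbf a$, we conclude $N = \mathbb P$.

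The only delicate point is the bookkeeping in Stage 1 involving $\epsilon_i$, which equals $2$ precisely at the fixed node $i = r+1$ of Satake diagram II (where $\rho_n(r+1) = r+1$); the special scalings built into \eqref{dimiximi} and \eqref{dixixidi} at that node are exactly what is needed to offset the factor $\epsilon_{r+1} = 2$ appearing in the action. Stage 2 is entirely routine once one observes that $\mathfrak d_i$ produces no cancellations between terms arising from different exponent vectors.
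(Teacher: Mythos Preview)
Your proof is correct, and there is nothing to compare it against: the paper does not prove this theorem but simply imports it from \cite{FGH}. Your two-stage approach---verify each defining relation of $\mathbf A_q(\mathcal S)$ on a generic monomial, then run a minimal-total-degree argument using $\mathfrak d_i$ to reach the constant $1$ and regenerate everything via the $\mathfrak x_i$---is the standard and expected one.

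One small correction to your closing commentary, which does not affect the argument itself: you have the two Satake diagrams swapped. The condition $\delta_{i,\rho_n(r)}\delta_{i,r+1}=1$ forces $\rho_n(r)=r+1$, i.e.\ $n+1-r=r+1$, which holds precisely when $n=2r$ (Satake diagram~\RomanNumeralCaps{1}). For diagram~\RomanNumeralCaps{2} ($n=2r+1$) one has $\rho_n(r)=r+2\neq r+1$, so $\epsilon_i=1$ for every $i$. You were likely thinking of the condition $\rho_n(r+1)=r+1$ (the existence of a $\rho_n$-fixed node), which does characterize diagram~\RomanNumeralCaps{2}, but that is not the condition appearing in the exponent. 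Since your verification of the relations works uniformly in $\epsilon_i\in\{1,2\}$, this mislabeling is purely cosmetic.
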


Recall that $n=2r$ or $2r+1$.
We define linear operators $\mathcal T'_{i,e}, \mathcal T''_{i,-e}$ $(1\leq i\leq [\frac{n+1}{2}])$ on $\mathbb P$ as follows.
\begin{align*}
\mathcal T'_{i,e}(\mathbf X^{\mathbf a})&=
\begin{cases}
{(-q^{-e})}^{-a_{i+1}}q^{ea_ia_{i+1}}(s_i\mathbf X^{\mathbf a}),& \text{if } i\neq  r+1,\\
q^{\frac{1}{2}ea_r^2+\frac{3}{2}ea_r-ea_{r+1}+2ea_ra_{r+1}}\mathbf X^{\mathbf a},& \text{if } i= r,\ \rho_n(r)=r+1,\\
q^{\frac{1}{2}a_{r+1}^2e-\frac{1}{2}a_{r+1}e}\mathbf X^{\mathbf a},& \text{if } i=r+1,\ \rho_n(r+1)=r+1,\\
\end{cases}\\
\mathcal T''_{i,-e}(\mathbf X^{\mathbf a})&=
\begin{cases}
	{(-q^{-e})}^{a_{i}}q^{-ea_ia_{i+1}}(s_i\mathbf X^{\mathbf a}),& \text{if } i\neq r+1,\\
	q^{-\frac{1}{2}ea_r^2-\frac{3}{2}ea_r+ea_{r+1}-2ea_ra_{r+1}}\mathbf X^{\mathbf a},& \text{if } i= r,\ \rho_n(r)=r+1,\\
q^{-\frac{1}{2}a_{r+1}^2e+\frac{1}{2}a_{r+1}e}\mathbf X^{\mathbf a},& \text{if } i=r+1,\ \rho_n(r+1)=r+1,\\
\end{cases}
\end{align*}
where $s_i$ switch $X_i$ and $X_{i+1}$ in $\mathbf X^{\mathbf a}$.
\begin{lemma}\label{lem:calT'calT''=id}
The operators $\mathcal T'_{i,e}$ and $\mathcal T''_{i,-e}$ are inverse of each other, i.e.,
$$
\mathcal T'_{i,e}\mathcal T''_{i,-e}=\mathcal T''_{i,-e}\mathcal T'_{i,e}=\mathrm{id}.
$$
\end{lemma}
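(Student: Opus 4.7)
The plan is to verify both compositions on the monomial basis $\{\mathbf X^{\mathbf a} : \mathbf a \in \mathbb Z_{\geq 0}^{r+1}\}$ of $\mathbb P$, which suffices by $\mathbb Q(q)$-linearity. Since the definitions of $\mathcal T'_{i,e}$ and $\mathcal T''_{i,-e}$ are piecewise on $i$, the verification splits into the three cases appearing in the definition, and in each case one checks that the scalar prefactors multiply to $1$ after accounting for the swap $s_i$ when present.

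The main case is $i \neq r+1$ (together with its non-exceptional subcase), where both operators involve $s_i$. Writing $\mathbf b = s_i\mathbf a$ so that $b_i = a_{i+1}$, $b_{i+1} = a_i$, and $b_j = a_j$ otherwise, I would compute
\begin{align*}
\mathcal T''_{i,-e}\mathcal T'_{i,e}(\mathbf X^{\mathbf a}) &= (-q^{-e})^{-a_{i+1}}q^{ea_ia_{i+1}}\,\mathcal T''_{i,-e}(\mathbf X^{\mathbf b}) \\
&= (-q^{-e})^{-a_{i+1}}q^{ea_ia_{i+1}}\,(-q^{-e})^{b_i}q^{-eb_ib_{i+1}}\,\mathbf X^{s_i\mathbf b} \\
&= (-q^{-e})^{-a_{i+1}+a_{i+1}}q^{ea_ia_{i+1}-ea_ia_{i+1}}\,\mathbf X^{\mathbf a} = \mathbf X^{\mathbf a},
\end{align*}
using $s_i^2 = \mathrm{id}$. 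The reverse composition $\mathcal T'_{i,e}\mathcal T''_{i,-e}$ is completely symmetric under the substitution $\mathbf a \leftrightarrow \mathbf b$ and the exponent sign flips.

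In the two remaining scalar-valued cases ($i = r$ with $\rho_n(r) = r+1$, and $i = r+1$ with $\rho_n(r+1) = r+1$), the operators act diagonally on $\mathbf X^{\mathbf a}$, and the prefactor exponents appearing in $\mathcal T'_{i,e}$ and $\mathcal T''_{i,-e}$ are literal negatives of each other by inspection, so both compositions reduce to $\mathbf X^{\mathbf a}$ immediately. There is no substantive obstacle to this argument; the only care required is to correctly reindex $b_i, b_{i+1}$ in terms of $a_{i+1}, a_i$ inside $\mathcal T''_{i,-e}(\mathbf X^{\mathbf b})$ so that the $(-q^{-e})$-part and the quadratic $q$-part cancel exactly.
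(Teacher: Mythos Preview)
Your proposal is correct and is precisely the direct calculation the paper has in mind; the paper's own proof consists of the single sentence ``The assertion can be obtained by direct calculation,'' and you have simply written that calculation out on the monomial basis with the correct reindexing under $s_i$.
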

\begin{proof}
The assertion can be obtained by direct calculation.
\end{proof}
\begin{thm}\label{thm:Tkf}
	For any $\mathfrak k\in \mathbf A_q(\mathcal{S})$ and $f(X_1,\cdots,X_{r+1})\in\mathbb P$, we have 
\begin{equation}\label{eq:Ttf}
\mathcal T_{i}(\mathfrak k f(X_1,\cdots,X_{r+1}))=T_{i}(\mathfrak k)\mathcal T_{i}(f(X_1,\cdots,X_{r+1})),
\end{equation}
where $\mathcal T_i:=\mathcal T_{i,e}'$ or $\mathcal T_{i,-e}''$.
Moreover, we have the following braid relations.
\begin{align}
	&\mathcal T_{i-1}\mathcal T_i\mathcal T_{i-1}=\mathcal T_i\mathcal T_{i-1}\mathcal T_i, \quad \text{if } 2 \leq i \leq [\frac{n+1}{2}]-1,\label{Ti-1TiTi-1Xa}\\
	&\mathcal T_{i-1}\mathcal T_i\mathcal T_{i-1}\mathcal T_i=\mathcal T_i\mathcal T_{i-1}\mathcal T_i\mathcal T_{i-1}, \quad \text{if } i=[\frac{n+1}{2}],\label{Tr-1TrTr-1TrXa}\\
	&\mathcal T_i\mathcal T_j=\mathcal T_j\mathcal T_i, \quad \text{if } |i-j|\neq 1.\label{TiTjXa}
\end{align}
\end{thm}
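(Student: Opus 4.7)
The plan is to prove both parts in sequence: first establish the intertwining identity \eqref{eq:Ttf} for $\mathfrak k$ a generator of $\mathbf A_q(\mathcal S)$, then extend to arbitrary $\mathfrak k$ by a short induction, and finally deduce the braid relations \eqref{Ti-1TiTi-1Xa}--\eqref{TiTjXa} by combining \eqref{eq:Ttf} with the braid relations on $\mathbf A_q(\mathcal S)$ already proved in Theorem~\ref{thm:braidAq}, exploiting the fact that $\mathbb P$ is cyclic, generated by $1$, as an $\mathbf A_q(\mathcal S)$-module.

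\textbf{Step 1 (intertwining on generators).} Since $\mathbf A_q(\mathcal S)$ is generated by $\mathfrak d_j, \mathfrak x_j, \mathfrak m_j^{\pm 1}$ and $\mathbb P$ is spanned by the monomials $\mathbf X^{\mathbf a}$, it suffices to verify
\[
\mathcal T_i(\mathfrak k\,\mathbf X^{\mathbf a}) \;=\; T_i(\mathfrak k)\,\mathcal T_i(\mathbf X^{\mathbf a})
\]
for each generator $\mathfrak k$ and each $\mathbf a \in \mathbb Z_{\geq 0}^{r+1}$. On the left I would use the module action from Theorem~\ref{thm:irreducible Aq module} to evaluate $\mathfrak k\,\mathbf X^{\mathbf a}$ and then apply $\mathcal T_i$; on the right I would read $T_i(\mathfrak k)$ off Theorem~\ref{thm:braidAq} (always a monomial in the generators) and apply this to $\mathcal T_i(\mathbf X^{\mathbf a})$ via the same module action. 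The verification splits by whether $|i-j|$ equals $0$, $1$, or exceeds $1$, and by whether a boundary index ($i = r$ in $\mathbf A_q^\jmath$, or $i \in \{r, r+1\}$ in $\mathbf A_q^\imath$) is involved; in each case the two sides become monomials in $\{\mathbf X^{\mathbf b}\}$ whose $q$-exponents match after a routine rearrangement using the commutation relations \eqref{mimj}--\eqref{dixixidi}.

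\textbf{Step 2 (extension to all of $\mathbf A_q(\mathcal S)$).} Once \eqref{eq:Ttf} is known for generators and all $f \in \mathbb P$, an induction on the length of a monomial in the generators yields
\[
\mathcal T_i(\mathfrak k_1\mathfrak k_2\,f) \;=\; T_i(\mathfrak k_1)\,\mathcal T_i(\mathfrak k_2 f) \;=\; T_i(\mathfrak k_1)T_i(\mathfrak k_2)\,\mathcal T_i(f) \;=\; T_i(\mathfrak k_1\mathfrak k_2)\,\mathcal T_i(f),
\]
since $T_i$ is an algebra automorphism by Theorem~\ref{thm:braidAq}. By $\mathbb Q(q)$-linearity the relation \eqref{eq:Ttf} extends to every $\mathfrak k \in \mathbf A_q(\mathcal S)$.

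\textbf{Step 3 (braid relations).} From the definition one checks $\mathcal T_i(1) = 1$: at $\mathbf a = 0$ every $q$-exponent vanishes and $s_i$ fixes $1$. Writing any monomial as $\mathbf X^{\mathbf a} = \mathfrak x_1^{a_1}\cdots\mathfrak x_{r+1}^{a_{r+1}}\cdot 1 =: \mathfrak x^{\mathbf a}\cdot 1$ and iterating \eqref{eq:Ttf},
\[
\mathcal T_{i-1}\mathcal T_i\mathcal T_{i-1}(\mathbf X^{\mathbf a}) \;=\; T_{i-1}T_iT_{i-1}(\mathfrak x^{\mathbf a})\cdot 1, \qquad \mathcal T_i\mathcal T_{i-1}\mathcal T_i(\mathbf X^{\mathbf a}) \;=\; T_iT_{i-1}T_i(\mathfrak x^{\mathbf a})\cdot 1.
\]
The braid identity \eqref{Ti-1TiTi-1Xa} now reduces to $T_{i-1}T_iT_{i-1} = T_iT_{i-1}T_i$ inside $\mathbf A_q(\mathcal S)$, which is part of Theorem~\ref{thm:braidAq}; the same scheme with four factors gives \eqref{Tr-1TrTr-1TrXa}, and with commuting factors gives \eqref{TiTjXa}.

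\textbf{Main obstacle.} The bulk of the work lies in Step~1. Although each case amounts to comparing two monomials in the generators with explicit $q$-power coefficients, the matching becomes delicate near the boundary indices, because the half-integer exponents in $\mathcal T'_{r,e}$ (Satake diagram \RomanNumeralCaps{1}) and $\mathcal T'_{r+1,e}$ (Satake diagram \RomanNumeralCaps{2}) have been tuned precisely so that they align with the coefficients appearing in $T'_{r,e}(\mathfrak d_r)$, $T'_{r,e}(\mathfrak x_r)$, $T'_{r+1,e}(\mathfrak d_{r+1})$ and $T'_{r+1,e}(\mathfrak x_{r+1})$. These are exactly the cases one cannot afford to be careless with, and they are the only genuine content of the proof beyond bookkeeping.
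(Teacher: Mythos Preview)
Your Steps~1 and~2 match the paper's argument essentially verbatim: the paper also reduces \eqref{eq:Ttf} to a generator-by-generator check against each monomial $\mathbf X^{\mathbf a}$, splitting into the same cases (boundary index versus generic $i$, and $j\in\{i,i+1\}$ versus $j\notin\{i,i+1\}$). One small addition in the paper that you omit: it first observes, via Lemma~\ref{lem:calT'calT''=id} and $T'_{i,e}T''_{i,-e}=\mathrm{id}$, that \eqref{eq:Ttf} for $\mathcal T''_{i,-e}$ follows formally from \eqref{eq:Ttf} for $\mathcal T'_{i,e}$, so only the primed case needs explicit verification. This halves the casework and is worth including.

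Your Step~3, however, genuinely diverges from the paper. The paper proves the braid relations \eqref{Ti-1TiTi-1Xa}--\eqref{TiTjXa} by \emph{direct computation}: it evaluates $\mathcal T'_{i,e}\mathcal T'_{i-1,e}\mathcal T'_{i,e}(\mathbf X^{\mathbf a})$ and $\mathcal T'_{i-1,e}\mathcal T'_{i,e}\mathcal T'_{i-1,e}(\mathbf X^{\mathbf a})$ explicitly (and similarly the four-fold product at the boundary), tracking all $q$-exponents and the permutation action, and compares. Your argument is more conceptual: since $\mathbb P$ is cyclic over $\mathbf A_q(\mathcal S)$ with generator $1$, and since $\mathcal T_i(1)=1$, the intertwining relation \eqref{eq:Ttf} transports any word in the $T_i$ acting on $\mathfrak x^{\mathbf a}$ to the corresponding word in the $\mathcal T_i$ acting on $\mathbf X^{\mathbf a}$, so the braid relations on $\mathbb P$ are inherited from those on $\mathbf A_q(\mathcal S)$. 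This is correct and cleaner; it avoids a second round of exponent bookkeeping. The paper's direct route, by contrast, is logically independent of the algebra braid relations and thus gives a self-contained check, at the cost of redundancy. One caveat: the braid relations for $T_i$ on $\mathbf A_q(\mathcal S)$ are not in Theorem~\ref{thm:braidAq} (which only establishes that the $T_i$ are automorphisms with $T'_{i,e}T''_{i,-e}=\mathrm{id}$) but in the subsequent theorem; adjust your citation accordingly.
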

\begin{proof}
We only show the proof for $\mathbf A_q^\jmath$.
If relation \eqref{eq:Ttf} holds for $\mathcal T_{i,e}'$, then by Lemma \ref{lem:calT'calT''=id}, we have
\begin{align*}
\mathcal T''_{i,-e}(\mathfrak k \mathbf X^{\mathbf a})&=\mathcal T''_{i,-e}\mathcal T''_{i,-e}\mathcal T'_{i,e}(\mathfrak k \mathbf X^{\mathbf a})=\mathcal T''_{i,-e}\mathcal T''_{i,-e}(T'_{i,e}(\mathfrak k) \mathcal T'_{i,e}(\mathbf X^{\mathbf a}))\\
&=\mathcal T''_{i,-e}\mathcal T''_{i,-e}(T'_{i,e}T'_{i,e}T''_{i,-e}(\mathfrak k) \mathcal T'_{i,e}\mathcal T'_{i,e}\mathcal T''_{i,-e}(\mathbf X^{\mathbf a}))\\
&=\mathcal T''_{i,-e}\mathcal T''_{i,-e}\mathcal T'_{i,e}\mathcal T'_{i,e}(T''_{i,-e}(\mathfrak k)T''_{i,-e}(\mathbf X^{\mathbf a}))=T''_{i,-e}(\mathfrak k)T''_{i,-e}(\mathbf X^{\mathbf a}).
\end{align*}	

Hence, for \eqref{eq:Ttf}, it is enough to show the following relations hold.
\begin{align}
	&\mathcal T'_{i,e}(\mathfrak d_j\mathbf X^{\mathbf a})=	 T'_{i,e}(\mathfrak d_j)	\mathcal T'_{i,e}(\mathbf X^{\mathbf a}),\label{TdX=TdTX}\\
	&\mathcal T'_{i,e}(\mathfrak x_j\mathbf X^{\mathbf a})=	 T'_{i,e}(\mathfrak x_j)	\mathcal T'_{i,e}(\mathbf X^{\mathbf a}),\label{TxX=TxTX}\\
	&\mathcal T'_{i,e}(\mathfrak m_j\mathbf X^{\mathbf a})=	 T'_{i,e}(\mathfrak m_j)	\mathcal T'_{i,e}(\mathbf X^{\mathbf a}).\label{TmX=TmTX}
\end{align}

For \eqref{TdX=TdTX}, if $i=r$, $j=r+1$, we have
\begin{align*}
&\mathcal T'_{r,e}(\mathfrak d_{r+1}\mathbf X^{\mathbf a})-T'_{r,e}(\mathfrak d_{r+1})\mathcal T'_{r,e}(\mathbf X^{\mathbf a})\\
=&[2a_{r+1}]q^{\frac{1}{2}ea_r^2+\frac{3}{2}ea_r-e(a_{r+1}-1)+2ea_r(a_{r+1}-1)}\mathbf (X^{\mathbf a-\mathbf e_{r+1}})\\
&-q^e\mathfrak m_r^{-2e}\mathfrak d_{r+1}q^{\frac{1}{2}ea_r^2+\frac{3}{2}ea_r-ea_{r+1}+2ea_ra_{r+1}}(\mathbf X^{\mathbf a})=0
\end{align*}

If $i=j=r$, we have
\begin{align*}
&\mathcal T'_{r,e}(\mathfrak d_{r}\mathbf X^{\mathbf a})-T'_{r,e}(\mathfrak d_{r})\mathcal T'_{r,e}(\mathbf X^{\mathbf a})\\
=&[a_{r}]q^{\frac{1}{2}e{(a_r-1)}^2+\frac{3}{2}e(a_r-1)-ea_{r+1}+2e(a_r-1)a_{r+1}}(\mathbf X^{\mathbf a-\mathbf e_{r}})\\
&-q^{-2e}\mathfrak m_r^{-e}\mathfrak m_{r+1}^{-e}\mathfrak d_rq^{\frac{1}{2}ea_r^2+\frac{3}{2}ea_r-ea_{r+1}+2ea_ra_{r+1}}(\mathbf X^{\mathbf a})=0.
\end{align*}

If $i=r$, $j\neq r$, $j\neq r+1$, we have
\begin{align*}
&\mathcal T'_{r,e}(\mathfrak d_{j}\mathbf X^{\mathbf a})-T'_{r,e}(\mathfrak d_{j})\mathcal T'_{r,e}(\mathbf X^{\mathbf a})\\
=&[a_{j}]q^{\frac{1}{2}ea_r^2+\frac{3}{2}ea_r-ea_{r+1}+2ea_ra_{r+1}}(\mathbf X^{\mathbf a-\mathbf e_{j}})\\
&-\mathfrak d_jq^{\frac{1}{2}ea_r^2+\frac{3}{2}ea_r-ea_{r+1}+2ea_ra_{r+1}}(\mathbf X^{\mathbf a})=0.
\end{align*}

If $i\neq r$, $j=i+1$, we have
\begin{align*}
&\mathcal T'_{i,e}(\mathfrak d_{i+1}\mathbf X^{\mathbf a})-T'_{i,e}(\mathfrak d_{i+1})\mathcal T'_{i,e}(\mathbf X^{\mathbf a})\\
=&[a_{i+1}]{(-q^{-e})}^{-(a_{i+1}-1)}q^{ea_i(a_{i+1}-1)}((s_i\mathbf X^{\mathbf a})\mathbf X^{-\mathbf e_i})\\
&+q^{-e}\mathfrak m_{i+1}^{-e}\mathfrak d_i{(-q^{-e})}^{-a_{i+1}}q^{ea_ia_{i+1}}(s_i\mathbf X^{\mathbf a})=0.
\end{align*}

If $i\neq r$, $i=j$, we have
\begin{align*}
&\mathcal T'_{i,e}(\mathfrak d_{i}\mathbf X^{\mathbf a})-T'_{i,e}(\mathfrak d_{i})\mathcal T'_{i,e}(\mathbf X^{\mathbf a})\\
=&[a_{i}]{(-q^{-e})}^{-a_{i+1}}q^{ea_{i+1}(a_i-1)}((s_i\mathbf X^{\mathbf a})\mathbf X^{-\mathbf e_{i+1}})\\
&-\mathfrak m_{i}^{-e}\mathfrak d_{i+1}{(-q^{-e})}^{-a_{i+1}}q^{ea_ia_{i+1}}(s_i\mathbf X^{\mathbf a})=0.
\end{align*}

If $i\neq r$, $j\neq i$, $j\neq i+1$, we have
\begin{align*}
&\mathcal T'_{i,e}(\mathfrak d_{j}\mathbf X^{\mathbf a})-T'_{i,e}(\mathfrak d_{j})\mathcal T'_{i,e}(\mathbf X^{\mathbf a})\\
=&[a_{j}]{(-q^{-e})}^{-a_{i+1}}q^{ea_{i+1}a_i}(\mathbf X^{\mathbf a-\mathbf e_j})\\
&-{(-q^{-e})}^{-a_{i+1}}q^{ea_{i+1}a_i}[a_{j}](\mathbf X^{\mathbf a-\mathbf e_j})=0.
\end{align*}

For \eqref{TxX=TxTX}, if $i=r$, $j=r+1$, we have
\begin{align*}
	&\mathcal T'_{r,e}(\mathfrak x_{r+1}\mathbf X^{\mathbf a})-T'_{r,e}(\mathfrak x_{r+1})\mathcal T'_{r,e}(\mathbf X^{\mathbf a})\\
	=&q^{\frac{1}{2}ea_r^2+\frac{3}{2}ea_r-e(a_{r+1}+1)+2ea_r(a_{r+1}+1)}(\mathbf X^{\mathbf a+\mathbf e_{r+1}})\\
	&-q^{\frac{1}{2}ea_r^2+\frac{7}{2}ea_r-ea_{r+1}+2ea_ra_{r+1}-e}(\mathbf X^{\mathbf a+\mathbf e_{r+1}})=0.
\end{align*}

If $i=j=r$, we have
\begin{align*}
	&\mathcal T'_{r,e}(\mathfrak x_{r}\mathbf X^{\mathbf a})-T'_{r,e}(\mathfrak x_{r})\mathcal T'_{r,e}(\mathbf X^{\mathbf a})\\
	=&q^{\frac{1}{2}e{(a_r+1)}^2+\frac{3}{2}e(a_r+1)-ea_{r+1}+2e(a_r+1)a_{r+1}}(\mathbf X^{\mathbf a+\mathbf e_{r}})\\
	&-q^{\frac{1}{2}ea_r^2+\frac{5}{2}ea_r+ea_{r+1}+2ea_ra_{r+1}+2e}(\mathbf X^{\mathbf a+\mathbf e_{r}})=0.
\end{align*}

If $i=r$, $j\neq r$, $j\neq r+1$ we have
\begin{align*}
	&\mathcal T'_{r,e}(\mathfrak x_{j}\mathbf X^{\mathbf a})-T'_{r,e}(\mathfrak x_{r})\mathcal T'_{r,e}(\mathbf X^{\mathbf a})\\
	=&q^{\frac{1}{2}e{(a_r+1)}^2+\frac{3}{2}e(a_r+1)-ea_{r+1}+2e(a_r+1)a_{r+1}}(\mathbf X^{\mathbf a+\mathbf e_{r}})\\
	&-q^{\frac{1}{2}ea_r^2+\frac{5}{2}ea_r+ea_{r+1}+2ea_ra_{r+1}+2e}(\mathbf X^{\mathbf a+\mathbf e_{r}})=0.
\end{align*}

If $i\neq r$, $j=i+1$, we have
\begin{align*}
	&\mathcal T'_{i,e}(\mathfrak x_{i+1}\mathbf X^{\mathbf a})-T'_{i,e}(\mathfrak x_{i+1})\mathcal T'_{i,e}(\mathbf X^{\mathbf a})\\
	=&{(-q^{-e})}^{-a_{i+1}-1}q^{ea_i(a_{i+1}+1)}((s_i\mathbf X^{\mathbf a})\mathbf X^{\mathbf e_i})\\
	&+q^{e}\mathfrak m^{e}_{i+1}\mathfrak x_i{(-q^{-e})}^{-a_{i+1}}q^{ea_ia_{i+1}}(s_i\mathbf X^{\mathbf a})=0.
\end{align*}

If $i\neq r$, $j=i$, we have
\begin{align*}
	&\mathcal T'_{i,e}(\mathfrak x_{i}\mathbf X^{\mathbf a})-T'_{i,e}(\mathfrak x_{i})\mathcal T'_{i,e}(\mathbf X^{\mathbf a})\\
	=&{(-q^{-e})}^{-a_{i+1}}q^{ea_{i+1}(a_{i}+1)}((s_i\mathbf X^{\mathbf a})\mathbf X^{\mathbf e_{i+1}})\\
	&-\mathfrak m^{e}_{i}\mathfrak x_{i+1}{(-q^{-e})}^{-a_{i+1}}q^{ea_ia_{i+1}}(s_i\mathbf X^{\mathbf a})=0.
\end{align*}

If $i\neq r$, $j\neq i$, $j\neq i+1$ we have
\begin{align*}
	&\mathcal T'_{i,e}(\mathfrak x_{j}\mathbf X^{\mathbf a})-
	T'_{i,e}(\mathfrak x_{j})\mathcal T'_{i,e}(\mathbf X^{\mathbf a})\\
	=&{(-q^{-e})}^{-a_{i+1}}q^{ea_{i+1}a_{i}}(s_i\mathbf X^{\mathbf a+\mathbf e_j})
	-\mathfrak x_j{(-q^{-e})}^{-a_{i+1}}q^{ea_ia_{i+1}}(s_i\mathbf X^{\mathbf a})
	=0.
\end{align*}

To sum up the above calculations, for all cases, the relations \eqref{TdX=TdTX} and \eqref{TxX=TxTX} hold.
Similar to the proof above, we can prove the formula \eqref{TmX=TmTX} is true.
\vskip 2mm
For the braid relations \eqref{Ti-1TiTi-1Xa}--\eqref{TiTjXa}, if $2 \leq i \leq r-1$, we have
\begin{align*}
\mathcal T'_{i,e}\mathbf X^{\mathbf a}
&={(-q^{-e})}^{-a_{i+1}}q^{ea_ia_{i+1}}(s_i\mathbf X^{\mathbf a}),\\
\mathcal T'_{i-1,e}\mathcal T'_{i,e}\mathbf X^{\mathbf a}
&={(-q^{-e})}^{-2a_{i+1}}q^{ea_ia_{i+1}+ea_{i-1}a_{i+1}}(s_{i-1}s_i\mathbf X^{\mathbf a}),\\
\mathcal T'_{i,e}\mathcal T'_{i-1,e}\mathcal T'_{i,e}\mathbf X^{\mathbf a}
&={(-q^{-e})}^{-2a_{i+1}-a_i}q^{ea_ia_{i+1}+ea_{i-1}a_{i+1}+ea_{i-1}a_i}(s_is_{i-1}s_i\mathbf X^{\mathbf a}),\\
\mathcal T'_{i-1,e}\mathbf X^{\mathbf a}
&={(-q^{-e})}^{-a_{i}}q^{ea_{i-1}a_{i}}(s_{i-1}\mathbf X^{\mathbf a}),\\
\mathcal T'_{i,e}\mathcal T'_{i-1,e}\mathbf X^{\mathbf a}
&={(-q^{-e})}^{-a_i-a_{i+1}}q^{ea_ia_{i-1}+ea_{i-1}a_{i+1}}(s_is_{i-1}\mathbf X^{\mathbf a}),\\
\mathcal T'_{i-1,e}\mathcal T'_{i,e}\mathcal T'_{i-1,e}\mathbf X^{\mathbf a}
&={(-q^{-e})}^{-2a_{i+1}-a_i}q^{ea_ia_{i+1}+ea_{i-1}a_{i+1}+ea_{i-1}a_i}(s_{i-1}s_is_{i-1}\mathbf X^{\mathbf a}).
\end{align*}

Since $s_is_{i-1}s_i\mathbf X^{\mathbf a}=s_{i-1}s_is_{i-1}\mathbf X^{\mathbf a}$, the relation $\mathcal T'_{i,e}\mathcal T'_{i-1,e}\mathcal T'_{i,e}=\mathcal T'_{i-1,e}\mathcal T'_{i,e}\mathcal T'_{i-1,e}$ holds.
\vskip 2mm
If $i=r$, we compute $\mathcal T'_{r-1,e}\mathcal T'_{r,e}\mathcal T'_{r-1,e}(\mathbf X^{\mathbf a})$ as follows.
\begin{equation}\label{eq:Tr-1TrTr-1}
\begin{aligned}
&\mathcal T'_{r-1,e}\mathcal T'_{r,e}\mathcal T'_{r-1,e}(\mathbf X^{\mathbf a})\\
=&\mathcal T'_{r-1,e}\mathcal T'_{r,e}{(-q^{-e})}^{-a_r}q^{ea_{r-1}a_r}(s_{r-1}\mathbf X^{\mathbf a})\\
=&q^{\frac{1}{2}ea_{r-1}^2+\frac{3}{2}ea_{r-1}-ea_{r+1}+2ea_{r-1}a_{r+1}}{(-q^{-e})}^{-a_r}q^{ea_{r-1}a_r}\mathcal T'_{r-1,e}(s_{r-1}\mathbf X^{\mathbf a})\\
=&q^{\frac{1}{2}ea_{r-1}^2+\frac{3}{2}ea_{r-1}-ea_{r+1}+2ea_{r-1}a_{r+1}}{(-q^{-e})}^{a_{r-1}-a_r}q^{2ea_{r-1}a_r}(\mathbf X^{\mathbf a}).
\end{aligned}
\end{equation}

By \eqref{eq:Tr-1TrTr-1}, we have
\begin{align*}
	&\mathcal T'_{r,e}\mathcal T'_{r-1,e}\mathcal T'_{r,e}\mathcal T'_{r-1,e}(\mathbf X^{\mathbf a})\\
	=&q^{\frac{1}{2}ea_{r-1}^2+\frac{3}{2}ea_{r-1}-ea_{r+1}+2ea_{r-1}a_{r+1}}{(-q^{-e})}^{a_{r-1}-a_r}q^{2ea_{r-1}a_r}\mathcal T'_{r,e}(\mathbf X^{\mathbf a})
\end{align*}
and
\begin{align*}
	&\mathcal T'_{r-1,e}\mathcal T'_{r,e}\mathcal T'_{r-1,e}(\mathcal T'_{r,e}\mathbf X^{\mathbf a})\\
	=&q^{\frac{1}{2}ea_{r-1}^2+\frac{3}{2}ea_{r-1}-ea_{r+1}+2ea_{r-1}a_{r+1}}{(-q^{-e})}^{a_{r-1}-a_r}q^{2ea_{r-1}a_r}(\mathcal T'_{r,e}\mathbf X^{\mathbf a}).
\end{align*}

Hence, we have $\mathcal T'_{r,e}\mathcal T'_{r-1,e}\mathcal T'_{r,e}\mathcal T'_{r-1,e}=\mathcal T'_{r-1,e}\mathcal T'_{r,e}\mathcal T'_{r-1,e}\mathcal T'_{r,e}$.
The proof of \eqref{TiTjXa} is trivial, so we omit it.
\end{proof}

\begin{thm}
The operator $\mathcal T_i$ satisfying \eqref{eq:Ttf} is unique.
\end{thm}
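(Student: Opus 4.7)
The plan is to exploit the intertwining relation \eqref{eq:Ttf} together with the irreducibility of $\mathbb P$ as an $\mathbf A_q(\mathcal S)$-module (Theorem \ref{thm:irreducible Aq module}) to reduce the uniqueness of $\mathcal T_i$ on all of $\mathbb P$ to its value on the constant polynomial $1$. Since the operators $T_i$ on $\mathbf A_q(\mathcal S)$ are already fixed by Theorem \ref{thm:braidAq}, any ambiguity in $\mathcal T_i$ must be concentrated in its value on a cyclic generator.

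First I would record the cyclicity of $\mathbb P$ as an $\mathbf A_q(\mathcal S)$-module with generator $1$: every monomial satisfies $\mathbf X^{\mathbf a} = \mathfrak x_1^{a_1}\cdots \mathfrak x_{r+1}^{a_{r+1}}\cdot 1$. Iterating \eqref{eq:Ttf} gives
\[
\mathcal T_i(\mathbf X^{\mathbf a}) \;=\; T_i\!\left(\mathfrak x_1^{a_1}\cdots \mathfrak x_{r+1}^{a_{r+1}}\right) \mathcal T_i(1),
\]
so $\mathcal T_i$ on all of $\mathbb P$ is completely determined by the single vector $\mathcal T_i(1)$.

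Next I would argue $\mathcal T_i(1) \in \mathbb Q(q)\cdot 1$. Applying \eqref{eq:Ttf} with $f = 1$ and $\mathfrak k = \mathfrak d_j$, and using $\mathfrak d_j\cdot 1 = 0$, we obtain $T_i(\mathfrak d_j)\,\mathcal T_i(1) = 0$ for every $j$. The explicit formulas of Theorem \ref{thm:braidAq} show that $T_i(\mathfrak d_j) = \alpha_j\,\mathfrak m^{\bullet}\,\mathfrak d_{k(j)}$ with $\alpha_j\in \mathbb Q(q)^{\times}$, $\mathfrak m^{\bullet}$ a monomial in the invertible operators $\mathfrak m_l^{\pm 1}$, and the induced map $j\mapsto k(j)$ a bijection of the index set. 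Hence $\mathfrak d_k\cdot \mathcal T_i(1) = 0$ for every $k$, and since the common kernel of the $\mathfrak d_k$ on $\mathbb P$ is exactly $\mathbb Q(q)\cdot 1$, we conclude $\mathcal T_i(1) = c\cdot 1$ for some scalar $c$.

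Finally, to pin down $c$ and recover the explicit formulas, I would compute $T_i(\mathfrak x_1^{a_1}\cdots \mathfrak x_{r+1}^{a_{r+1}})\cdot 1$ by straightening with the commutation relations \eqref{mimj}--\eqref{dixixidi} of Definition \ref{modified q-Weyl algebra} and the $\mathbf A_q(\mathcal S)$-action of Theorem \ref{thm:irreducible Aq module}, and normalize $c = 1$; this normalization is implicit in the candidate definitions, where each case of $\mathcal T_{i,e}'(\mathbf X^{\mathbf a})$ and $\mathcal T_{i,-e}''(\mathbf X^{\mathbf a})$ reduces to $1$ at $\mathbf a = \mathbf 0$. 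The main obstacle is precisely this scalar normalization: because $\mathbb P$ is an irreducible module, \eqref{eq:Ttf} alone is a Schur-type condition that determines $\mathcal T_i$ only up to a nonzero scalar, so the uniqueness statement must be read modulo the normalization $\mathcal T_i(1) = 1$. With this normalization in place, the resulting closed-form expression agrees case by case with the defining formulas for $\mathcal T_{i,e}'$ and $\mathcal T_{i,-e}''$, and a straightforward induction on $|\mathbf a|$ matches the prefactors $(-q^{-e})^{\pm a_{i+1}}q^{\pm e a_i a_{i+1}}$ with the results of moving $T_i(\mathfrak x_j)$ past the appropriate $\mathfrak m$-eigenvectors, completing the argument.
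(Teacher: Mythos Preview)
Your argument is correct and takes a genuinely different route from the paper.  The paper works \emph{downwards}: for each monomial $\mathbf X^{\mathbf a}$ it applies the intertwining relation with $\mathfrak k=\mathfrak d_1^{a_1}\cdots\mathfrak d_{r+1}^{a_{r+1}}$, so that the left-hand side becomes the scalar $\bigl(\prod_{j\le r}[a_j]!\bigr)[a_{r+1}]!!$, then explicitly rewrites $T_i(\mathfrak d_1^{a_1}\cdots\mathfrak d_{r+1}^{a_{r+1}})$ as a $q$-power times a monomial in the $\mathfrak m_k$ times $\prod_j\mathfrak d_j^{a_j}$ (or its $s_i$-permuted version) and reads off $\mathfrak T'_{i,e}(\mathbf X^{\mathbf a})$ by comparing with the action of that same operator on the candidate monomial.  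You instead work \emph{upwards}: cyclicity via the $\mathfrak x_j$'s reduces everything to $\mathcal T_i(1)$, and the intertwining relation with $\mathfrak k=\mathfrak d_j$ forces $\mathcal T_i(1)\in\bigcap_k\ker\mathfrak d_k=\mathbb Q(q)\cdot 1$.  Your approach is shorter and more conceptual; the paper's approach has the advantage of producing the closed formulas for $\mathcal T_i(\mathbf X^{\mathbf a})$ directly as a by-product of the uniqueness computation, whereas in your version that is a separate (though routine) straightening calculation.

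Your remark about the scalar is well taken and in fact sharpens the paper's own argument: relation \eqref{eq:Ttf} is homogeneous in $\mathcal T_i$, so it can only determine $\mathcal T_i$ up to a nonzero scalar.  The paper uses the same normalization you isolate, but tacitly: in passing from \eqref{eq:frakTdX} to the displayed identity that follows it, the left-hand side $\mathfrak T'_{i,e}\bigl((\prod_j\mathfrak d_j^{a_j})\mathbf X^{\mathbf a}\bigr)$ is replaced by the bare constant $\bigl(\prod_{j\le r}[a_j]!\bigr)[a_{r+1}]!!$, which is exactly the assumption $\mathfrak T'_{i,e}(1)=1$.  So both proofs rest on the normalization $\mathcal T_i(1)=1$; you make this explicit, and that is the right thing to do.
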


\begin{proof}
We show the proof for the action of $\mathbf A_q^\jmath$ on $\mathbb P$.
We assume that there exists another linear operators $\mathfrak T'_{i,e}$ such that 	\eqref{eq:Ttf} holds. For $1 \leq i\leq r$ and $\mathbf a=(a_1,a_2,\cdots,a_{r+1})\in\mathbb Z_{\geq 0}^{r+1}$, we consider the following action.
\begin{equation}\label{eq:frakTdX}
\mathfrak T'_{i,e}((\mathfrak d_1^{a_1}\cdots\mathfrak d_{r+1}^{a_{r+1}})\mathbf X^{\mathbf a})=T_{i,e}(\mathfrak d_1^{a_1}\cdots\mathfrak d_{r+1}^{a_{r+1}})\mathfrak T'_{i,e}(\mathbf X^{\mathbf a}).
\end{equation}

If $i=r$, then the formula in \eqref{eq:frakTdX} will be written in the following form.
\begin{equation}\label{eq:TdXreduce}
\begin{aligned}
(\prod_{i=1}^{r}[a_i]!)[a_{r+1}]!!&=(\prod_{i=1}^{r-1}\mathfrak d_i^{a_i}){(q^{-2e}\mathfrak m_r^{-e}\mathfrak m_{r+1}^{-e}\mathfrak d_r)}^{a_r}{(q^e\mathfrak m_r^{-2e}\mathfrak d_{r+1})}^{a_{r+1}}\mathfrak T'_{r,e}(\mathbf X^{\mathbf a})\\
&=(\prod_{i=1}^{r-1}\mathfrak d_i^{a_i}) 
q^{-2ea_r}\mathfrak m_{r+1}^{-ea_r}{(\mathfrak m_{r}^{-e}\mathfrak d_r)}^{a_r}q^{ea_{r+1}}\mathfrak m_r^{-2ea_{r+1}}\mathfrak d_{r+1}^{a_{r+1}}
\mathfrak T'_{r,e}(\mathbf X^{\mathbf a})\\
&=q^{ea_{r+1}-2ea_r-e\frac{a_r(a_r-1)}{2}-2ea_ra_{r+1}}
\mathfrak m_{r+1}^{-ea_r}\mathfrak m_r^{-ea_r-2ea_{r+1}}(\prod_{i=1}^{r+1}\mathfrak d_i^{a_i}) 
\mathfrak T'_{r,e}(\mathbf X^{\mathbf a})\\
&=q^{-\frac{1}{2}ea_r^2-\frac{3}{2}ea_r+ea_{r+1}-2ea_ra_{r+1}}
\mathfrak m_{r+1}^{-ea_r}\mathfrak m_r^{-ea_r-2ea_{r+1}}(\prod_{i=1}^{r+1}\mathfrak d_i^{a_i}) 
\mathfrak T'_{r,e}(\mathbf X^{\mathbf a})\\
\end{aligned}
\end{equation}
where
$[a_{r+1}]!!=[2a_{r+1}][2a_{r+1}-2]\cdots[2]$.
\vskip 2mm
On the other hand, we have the following formula.
\begin{equation}\label{eq:qmdX}
\begin{aligned}
&q^{-\frac{1}{2}ea_r^2-\frac{3}{2}ea_r+ea_{r+1}-2ea_ra_{r+1}}
\mathfrak m_{r+1}^{-ea_r}\mathfrak m_r^{-ea_r-2ea_{r+1}}(\prod_{i=1}^{r+1}\mathfrak d_i^{a_i})\mathbf X^{\mathbf a}\\
=&q^{-\frac{1}{2}ea_r^2-\frac{3}{2}ea_r+ea_{r+1}-2ea_ra_{r+1}}
 (\prod_{i=1}^{r}[a_i]!)[a_{r+1}]!!
\end{aligned}
\end{equation}

Comparing \eqref{eq:TdXreduce} and \eqref{eq:qmdX}, we have
\begin{equation}\label{eq:frakTrXa}
\mathfrak T'_{r,e}(\mathbf X^{\mathbf a})=q^{\frac{1}{2}ea_r^2+\frac{3}{2}ea_r-ea_{r+1}+2ea_ra_{r+1}}\mathbf X^{\mathbf a}.
\end{equation}
\vskip 2mm
If $i\neq r$, the formula in \eqref{eq:frakTdX} will be written in the following form.
\begin{equation}\label{eq:TdXreduce2}
	\begin{aligned}
		(\prod_{j=1}^{r}[a_j]!)[a_{r+1}]!!&=(\prod_{j=1}^{i-1}\mathfrak d_j^{a_j}){(\mathfrak m_i^{-e}\mathfrak d_{i+1})}^{a_i}{(-q^{-e}\mathfrak m_{i+1}^{-e}\mathfrak d_{i})}^{a_{i+1}}
		(\prod_{j=i+2}^{r+1}\mathfrak d_j^{a_j})
		\mathfrak T'_{i,e}(\mathbf X^{\mathbf a})\\
		&={(-q^{-e})}^{a_{i+1}}q^{-ea_ia_{i+1}}\mathfrak m_i^{-ea_i}\mathfrak m_{i+1}^{-ea_{i+1}}(\prod_{j=1}^{i-1}\mathfrak d_j^{a_j})
		\mathfrak d_i^{a_{i+1}}\mathfrak d_{i+1}^{a_{i}}
		(\prod_{j=i+2}^{r+1}\mathfrak d_j^{a_j})	
		\mathfrak T'_{i,e}(\mathbf X^{\mathbf a})
		\\
	\end{aligned}
\end{equation}

On the other hand, we have
\begin{equation}\label{eq:qmdX2}
	\begin{aligned}
		&{(-q^{-e})}^{a_{i+1}}q^{-ea_ia_{i+1}}\mathfrak m_i^{-ea_i}\mathfrak m_{i+1}^{-ea_{i+1}}(\prod_{j=1}^{i-1}\mathfrak d_j^{a_j})
		\mathfrak d_i^{a_{i+1}}\mathfrak d_{i+1}^{a_{i}}
		(\prod_{j=i+2}^{r+1}\mathfrak d_j^{a_j})(s_i\mathbf X^{\mathbf a})\\
		=&{(-q^{-e})}^{a_{i+1}}q^{-ea_ia_{i+1}}
		(\prod_{i=1}^{r}[a_i]!)[a_{r+1}]!!
	\end{aligned}
\end{equation}

Comparing \eqref{eq:TdXreduce2} and \eqref{eq:qmdX2}, we have
\begin{equation}\label{eq:frakTiXa}
\mathfrak T'_{i,e}(\mathbf X^{\mathbf a})=	{(-q^{-e})}^{-a_{i+1}}q^{ea_ia_{i+1}}(s_i\mathbf X^{\mathbf a}).
	\end{equation}

Combining the formulas in \eqref{eq:frakTrXa} and \eqref{eq:frakTiXa}, we can see that $\mathfrak T'_{i,e}$ coincide with $\mathcal T'_{i,e}$. By a similar approach, we can also prove that the linear operator $\mathcal T''_{i,-e}$ is unique.
	\end{proof}

By Proposition \ref{prop:UjtoAq} and Theorem \ref{thm:irreducible Aq module}, the polynomial ring $\mathbb P$ is a $^\imath\bfU(\mathcal{S})$-module with the following actions.
\begin{equation*}
B_i\mathbf X^{\mathbf a}=\varphi(B_i)\mathbf X^{\mathbf a},\quad K_i\mathbf X^{\mathbf a}=\varphi(K_i)\mathbf X^{\mathbf a}.
\end{equation*}

\begin{thm}
	For any $u\in {^\imath\bfU(\mathcal{S})}$ and $f(X_1,\cdots,X_{r+1})\in\mathbb P$, we have 
	\begin{equation*}
		\mathcal T_{i}(u f(X_1,\cdots,X_{r+1}))=\tau_{i}(u)\mathcal T_{i}(f(X_1,\cdots,X_{r+1})),
	\end{equation*}
	where $\mathcal T_i:=\mathcal T_{i,e}'$ or $\mathcal T_{i,-e}''$.

\end{thm}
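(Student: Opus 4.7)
The statement is a direct compatibility check that follows by chaining together two results already established earlier in the paper, so my plan is essentially to unwind the definitions and apply them in order. The key observation is that the $^\imath\bfU(\mathcal{S})$-action on $\mathbb P$ is defined through the homomorphism $\varphi$, so for any $u\in {^\imath\bfU(\mathcal{S})}$ and $f\in\mathbb P$, one has $u\cdot f = \varphi(u)\cdot f$, where the right-hand side uses the $\mathbf A_q(\mathcal{S})$-module structure from Theorem \ref{thm:irreducible Aq module}.

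The plan is to compute $\mathcal T_i(u f)$ by substitution:
\begin{align*}
\mathcal T_i(u f) \;=\; \mathcal T_i\bigl(\varphi(u)\, f\bigr) \;=\; T_i\bigl(\varphi(u)\bigr)\,\mathcal T_i(f) \;=\; \varphi\bigl(\tau_i(u)\bigr)\,\mathcal T_i(f) \;=\; \tau_i(u)\,\mathcal T_i(f).
\end{align*}
The second equality is exactly the intertwining relation \eqref{eq:Ttf} of Theorem \ref{thm:Tkf}, applied to $\mathfrak k := \varphi(u)\in\mathbf A_q(\mathcal{S})$ and to $f\in\mathbb P$. The third equality is the intertwining relation $T_i\circ\varphi = \varphi\circ\tau_i$ from Theorem \ref{thm:Tphi=phitau}. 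The final equality again unwraps the definition of the $^\imath\bfU(\mathcal{S})$-module structure on $\mathbb P$ via $\varphi$.

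Since the identity is linear in $u$ (both sides being $\mathbb Q(q)$-linear in $u$), it suffices in principle to verify it on generators $B_i$ and $K_i$; however, because we invoke Theorem \ref{thm:Tkf} directly at the level of the ambient algebra $\mathbf A_q(\mathcal{S})$, no case-by-case check on generators is actually needed. The argument applies uniformly to both $\mathcal T_i = \mathcal T'_{i,e}$ and $\mathcal T_i = \mathcal T''_{i,-e}$, since both variants satisfy the intertwining relation in Theorem \ref{thm:Tkf}, and correspondingly $T_i = T'_{i,e}$ or $T''_{i,-e}$ in Theorem \ref{thm:Tphi=phitau}.

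There is no genuine obstacle in this proof, as all the heavy lifting (the intertwining relation with $\varphi$ at the algebra level, and the compatibility of $T_i$ with $\tau_i$ under $\varphi$) has already been carried out in Theorems \ref{thm:Tphi=phitau} and \ref{thm:Tkf}. The statement is essentially a corollary obtained by composing these two intertwining properties, together with the definition of the $^\imath\bfU(\mathcal{S})$-action on $\mathbb P$ via $\varphi$.
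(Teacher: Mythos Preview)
Your proposal is correct and follows essentially the same approach as the paper: both chain together the intertwining relation $T_i\circ\varphi=\varphi\circ\tau_i$ from Theorem~\ref{thm:Tphi=phitau} with the compatibility $\mathcal T_i(\mathfrak k f)=T_i(\mathfrak k)\mathcal T_i(f)$ from Theorem~\ref{thm:Tkf}, using the definition of the $^\imath\bfU(\mathcal{S})$-action via $\varphi$. The only cosmetic difference is that the paper first reduces to generators $B_j$, $K_j$ before running the same chain of equalities, whereas you apply the argument directly to arbitrary $u$; as you correctly note, this reduction is unnecessary since Theorem~\ref{thm:Tkf} already holds for all of $\mathbf A_q(\mathcal{S})$.
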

\begin{proof}
It is enough to show the following relations hold.
\begin{align}
	&\mathcal T'_{i,e}(B_j \mathbf X^{\mathbf a})=\tau'_{i,e}(B_j)\mathcal T'_{i,e}(\mathbf X^{\mathbf a}),\label{TeX=taueTX}\\
	&\mathcal T'_{i,e}(K_j \mathbf X^{\mathbf a})=\tau'_{i,e}(K_j)\mathcal T'_{i,e}(\mathbf X^{\mathbf a}).\label{TkX=taukTX}
\end{align}

We only show the proof of \eqref{TeX=taueTX} for $\bfU^\jmath$.
If $1\leq j\leq r$, by Theorem \ref{thm:Tkf} and Theorem \ref{thm:Tphi=phitau}, we have
\begin{align*}
\mathcal T'_{i,e}(B_j\mathbf X^{\mathbf a})
&=\mathcal T'_{i,e}(\varphi(B_j)\mathbf X^{\mathbf a})=\mathcal T'_{i,e}(\mathfrak x_j\mathfrak d_{j+1}\mathbf X^{\mathbf a})=T'_{i,e}(\mathfrak x_j\mathfrak d_{j+1})\mathcal T'_{i,e}(\mathbf X^{\mathbf a})\\
&=T'_{i,e}(\varphi(B_j))\mathcal T'_{i,e}(\mathbf X^{\mathbf a})=\varphi(\tau'_{i,e}(B_j))\mathcal T'_{i,e}(\mathbf X^{\mathbf a})=\tau'_{i,e}(B_j)\mathcal T'_{i,e}(\mathbf X^{\mathbf a}).
\end{align*}

The proof for the case $\rho_n(r)\leq j\leq \rho_n(1)$ is similar.
\end{proof}

The commutative diagram below shows the relations between $\tau_i$, $T_i$ and $\mathcal T_i$.
\begin{center}
\begin{tikzcd}
	^\imath\bfU(\mathcal{S}) \arrow[r, "\varphi"] \arrow[d, "\tau_i"] & \mathbf A_q(\mathcal{S}) \arrow[r, "\circlearrowleft"] \arrow[d, "T_i"] & \mathbb P \arrow[d, "\mathcal T_i"] \\
	^\imath\bfU(\mathcal{S}) \arrow[r, "\varphi"']          & \mathbf A_q(\mathcal{S}) \arrow[r, "\circlearrowleft"]                                         & \mathbb P
\end{tikzcd}
\end{center}


\end{document}